\subjclass[2000]{18F15, 53D15}
\keywords{Symplectic manifold, Lagrangian submanifold, deformation quantization}
\newtheorem{thm}{Theorem}[section]
\newtheorem{cor}[thm]{Corollary}
\newtheorem{prop}[thm]{Proposition}
\newtheorem{lemma}[thm]{Lemma}
\theoremstyle{remark}
\newtheorem{remark}[thm]{Remark}
\newtheorem{example}[thm]{Example}
\theoremstyle{definition}
\newtheorem{definition}[thm]{Definition}
\numberwithin{equation}{subsection}
\newcommand{\A}{{\widehat{\mathbb A}}}
\newcommand{\V}{{\mathcal V}}
\newcommand{\caH}{{\mathcal H}}
\newcommand{\caE}{{\mathcal E}}
\newcommand{\E}{{\mathcal E}}
\newcommand{\caP}{{\mathcal P}}
\newcommand{\caL}{{\mathcal L}}
\newcommand{\bV}{{\mathbb V}}
\newcommand{\C}{{\mathbb C}}
\newcommand{\fC}{\widehat{\mathbb C}}
\newcommand{\Z}{{\mathbb Z}}
\newcommand{\R}{{\mathbb R}}
\newcommand{\oiho}{\frac{1}{i\hbar}\omega}
\newcommand{\oih}{\frac{1}{i\hbar}}
\newcommand{\tG}{\widetilde{G}}
\newcommand{\tP}{\widetilde{P}}
\newcommand{\fx}{{\widehat{x}}}
\newcommand{\fy}{{\widehat{y}}}
\newcommand{\fz}{{\widehat{z}}}
\newcommand{\fxi}{{\widehat{\xi}}}
\newcommand{\feta}{{\widehat{\eta}}}
\newcommand{\isomoto}{\overset{\sim}{\to}}
\newcommand{\lisomoto}{\overset{\sim}{\leftarrow}}
\newcommand{\ab}{_{\alpha\beta}}
\newcommand{\abc}{_{\alpha\beta\gamma}}
\newcommand{\hor}{\operatorname{hor}}
\newcommand{\can}{\operatorname{can}}
\newcommand{\Lie}{\operatorname{Lie}}
\newcommand{\Sp}{\operatorname{Sp}}
\newcommand{\tSp}{\widetilde{\operatorname{Sp}}}
\newcommand{\T}{\mathbf{T}}
\newcommand{\g}{\mathfrak{g}}
\newcommand{\p}{\mathfrak{p}}
\newcommand{\jets}{\operatorname{jets}}
\newcommand{\Aut}{\operatorname{Aut}}
\newcommand{\Isom}{\operatorname{Isom}}
\newcommand{\tnabla}{\widetilde{\nabla}}
\newcommand{\ML}{\operatorname{ML}}
\newcommand{\OL}{\operatorname{OL}}
\newcommand{\GL}{\operatorname{GL}}
\newcommand{\Der}{\operatorname{Der}}
\newcommand{\cont}{\operatorname{cont}}
\newcommand{\ddx}{\frac{\partial}{\partial x}}
\newcommand{\ddfx}{\frac{\partial}{\partial \fx}}
\newcommand{\ddxi}{\frac{\partial}{\partial \xi}}
\newcommand{\ddfxi}{\frac{\partial}{\partial \fxi}}
\newcommand{\fxh}{\frac{\fx}{i\hbar}}
\newcommand{\xh}{\frac{x}{i\hbar}}
\newcommand{\DR}{\mathtt{DR}}
\newcommand{\Ad}{\operatorname{Ad}}
\newcommand{\tg}{{\widetilde{\mathfrak g}}}
\newcommand{\tp}{{\widetilde{\mathfrak p}}}
\newcommand{\caHIJ}{\caH^{\fy_{I,J}}}
\renewcommand{\subsubsection}{\@startsection
{subsubsection}%
{2}%
{0mm}%
{-\baselineskip}%
{-0.5\baselineskip}%
{\normalfont\normalsize\bfseries }}%
\begin{document}

\title{Oscillatory modules}

\author[B.Tsygan]{Boris Tsygan}
\address{Department of
Mathematics, Northwestern University, Evanston, IL 60208-2730, USA}
\email{tsygan@math.northwestern.edu}

\begin{abstract}
Developing the ideas of Bressler and Soibelman and of Karabegov, we introduce a notion of an oscillatory module on a symplectic manifold which is a sheaf of modules over the sheaf of deformation quantization algebras with an additional structure. We compare the category of oscillatory modules on a torus to the Fukaya category as computed by Polishchuk and Zaslow.
\end{abstract}

\thanks{Partially supported by NSF grant DMS-0605030}
%\date{\today}
\maketitle \vspace{-1cm}
%\tableofcontents

\bigskip

{\centerline{\em{In memory of Moshe Flato}}}

\smallskip

\section{Introduction}
Deformation quantization was first introduced in \cite{bffls}. For a symplectic manifold $(M,\, \omega)$, a deformation quantization algebra is $C^\infty(M)[[\hbar]]$ with a new associative, $\C[[\hbar]]$-bilinear product
$$f*g=fg+\sum_{k=1}^\infty (i\hbar)^kP_k(f,g)$$
 where $1*f=f*1=f$, $P_k$ are bidifferential operators, and $\oih(f*g-g*f)\equiv \{f,g\}(\rm{mod}\; \hbar),\;$ $\{f,g\}$ being the Poisson bracket associated to $\omega$. Since \cite{bffls} was motivated both by quantum mechanics and by representation theory, quantities of the type $\exp{\oih}S(x,\xi)$ played a prominent role there, as did modules over the deformed algebra. This is true with regard to many subsequent works on deformation quantization, in particular \cite{DJ} and \cite{BdM}.

Asymptotic analysis of quantities of the type $\exp({\oih}S(x,\xi))$ at $\hbar\to 0$ plays a crucial role in linear PDEs and in particular in quantum mechanics, cf. for example \cite{GS}, \cite{H}, \cite{L}, \cite{M}, \cite{MSS}, \cite{NSS}. This seems to have motivated not only \cite{bffls} but also Fedosov's approach to deformation quantization \cite{Fedosov}, as well as other works on the subject. An idea to systematically put the theory from Guillemin and Sternberg's book \cite{GS} into Fedosov's context was suggested to the author by A. Karabegov. We started to realize it in \cite{NT}. The present paper is a continuation of this work.

Let $(M,\, \omega)$ be a symplectic manifold. A deformation quantization algebra of $M$ can be viewed, according to \cite{Fedosov}, as the algebra of horizontal sections of a bundle of algebras ${\A}_M$ with a product-preserving flat connection $\nabla _{\mathbb A}.$ Locally in Darboux coordinates $(x,\xi)=(x_1,\ldots,x_n,\xi_1,\ldots, \xi_n)$, sections of $\A_M$ are functions $f(x,\xi,\fx,\fxi, \hbar)$ where $\fx=(\fx_1,\ldots, \fx_n)$ and $\fxi=(\fxi_1,\ldots, \fxi_n)$ are formal variables; the product is the $\Sp$-equivariant Moyal-Weyl product in $\fx$ and $\fxi$; the connection is, after a gauge transformation, equal to
$$\nabla_{\mathbb A}=(\ddx-\ddfx)dx+(\ddxi-\ddfxi)d\xi$$
The sheaf of $\nabla_{\mathbb A}$-horizontal sections of $\A_M$ is isomorphic to $C^\infty_M[[\hbar]]$ (locally, horizontal sections are of the form $f(x+\fx, \xi+\fxi)$ where $f$ is a smooth function). The product makes this sheaf a deformation quantization of $(M, \omega).$ In particular, a bundle $V$ of ${\A}_M$-modules with a compatible flat connection $\nabla _V$ gives rise to a sheaf of modules over the deformed algebra.

In the approach that is presented in this paper, expressions $\exp(\oih S)$ are viewed as sections of a groupoid on $M$. We start with the local, or rather formal, situation. Let $\A=\C[[\fx,\fxi,\hbar]]$ with the Moyal-Weyl product. Its (continuous) derivations are all of the form $[\oih H, ?]$ for $H\in \A.$ They form a Lie algebra $\g$ that admits a central extension $\tg=\oih \A$ with the bracket $[f,g]=f*g-g*f.$ Let $G$ be the group of continuous automorphisms of $\A$ that induce a real symplectic isomorphism of $\C^{2n}.$ We define a central extension $\tG$ of $G$ (cf. \ref{ss:Deformation quantization of a formal neighborhood}). Elements of $\tG$ can be written as $\exp(\oih S(\fx,\fxi, \hbar))$ where $S$ is a power series such that $S({\rm{mod}}\hbar)$ is a series in $\fx,\,\fxi$ without a linear part and with real quadratic part. Note that a constant part is allowed. In other words, the construction of $\tG$ out of $G$ includes formally allowing expressions $\exp(\oih a+b)$ where $a$ and $b$ are complex numbers. Note also that $\Sp(2n, \R)$ is a subgroup of $G$, and the universal cover $\tSp(2n, \R)$ is a subgroup of $\tG$.

Now we have a group $\tG$, a Lie algebra $\tg$, and an algebra $\A$; $\tG$ acts on $\tg$ and $\A$, and $\tg$ acts on $\A$. All these actions are compatible. Next (in \ref{ss:The algebraic Weil representation}) we construct a space $\caH$ on which $\tG$, $\tg$, and $\A$ all act in a compatible way. The space is an algebraic analog of the Weil metaplectic representation. Elements of $\caH$ are expressions of the type $\exp(\oih S(\fx, \hbar))f(\fx, \hbar)$ and their (partial) Fourier transforms, as explained in \ref{ss:The algebraic Weil representation}. Here $S$ is a power series as described above; $f$ belongs to a certain completion of the space of power series.

Now consider a symplectic manifold $(M, \omega).$ The Fedosov bundle of algebras $\A_M$ is a bundle with the fiber $\A.$ There are also a bundle of Lie algebras $\tg_M$ with the fiber $\tg$ and a bundle of groups with the fiber $\tG$. We extend this bundle of groups to a groupoid on $M$ (in \ref{ss:The groupoid GM}).

For $x,\,y$ in $M$, let $G_{x,y}$ be the set of (continuous) isomorphisms of algebras $\A _y\isomoto \A _x.$ The sets $G_{x,y}$ form a groupoid on $M$ that we denote by $G_M$. An element $g\in G_{x,y}$ defines an isomorphism $g_0: T_yM\isomoto T_xM$ (the linear part of $g$). Assume that $c_1(M)=0.$ We define a new groupoid $\tilde{G}_M$ which is an extension of $G_M$. For any $x\in M,$ $(\tG_M)_{x,x}$ (or simply $\tG_{x,x}$) is isomorphic to an extension of the fundamental group $\pi _1(M, x)$ by  $\tG$.

For example, if $M=\R^{2n},$ consider the trivial vector bundle with the fiber $\caH$. The set $\tG_{(x_1,\xi_1),(x_2,\xi_2)}$ consists of isomorphisms $\caH_{(x_2,\xi_2)}\isomoto \caH_{(x_1,\xi_1)}$ that are in the image of the group $\tG$. One can write sections of $\tG$ on $M\times M$ as expressions $\exp(\oih S)$ where $S=S(x_1,x_2, \xi_1, \xi_2, \fx, \fxi, \hbar)$ such that the linear part of $S({\rm{mod}}\hbar)$ with respect to $\fx$ and $\fxi$ is equal to
$$S_{\rm{lin}}=S_ {\fx}\fx+S_\fxi \fxi=(\xi_1-\xi_2)\fx-(x_1-x_2)\fxi$$
and the quadratic part
$$S_{\rm{quad}}=\frac{1}{2}S_{\fx\fx}\fx^2+S_{\fx\fxi}\fx\fxi+\frac{1}{2}S_{\fxi\fxi}\fxi^2$$
is real. Let $S_{\rm{nl}}=S-S_{\rm{lin}}.$ We understand $\exp(\oih S)$ as the product
\begin{equation}\label{eq:Slin, nl}\exp(\oih S)=\exp(\oih S_{\rm{lin}})\exp(\oih S_{\rm{nl }})
\end{equation}
where $\exp(\oih S_{\rm{nl }})$ is viewed as an element of $\tG_{(x_2, \xi_2), (x_2, \xi_2)}$ and $\exp(\oih S_{\rm{lin}})$ as a special element of $\tG_{(x_1, \xi_1), (x_2, \xi_2)}$, a parallel transport $\caH_{(x_2,\xi_2)}\isomoto \caH_{(x_1,\xi_1)}$.

\begin{remark}\label{rmk:meaning of S} Note that the interpretation of the terms $\exp(\oih S_{\rm{lin}})$ that is proposed here requires some explanation. For a linear function $l$ on $\R^{2n},$ we denote by $\exp(\oih l(\fx, \fxi))$ the parallel transport by the vector $v$ which is the image of $l$ under the isomorphism $\R^{2n, *}\isomoto \R^{2n}$ defined by the symplectic form. Parallel transports by different vectors {\em commute} with each other, and so do elements $\exp(\oih l(\fx, \fxi))$ of the groupoid $\tG$. A natural impulse would be to make them satisfy commutation relations similar to the ones in the Heisenberg group. There are indeed sections of the groupoid $\tG$ that satisfy these relations, and they play an important role, in particular in \ref{sss:The twisted bundle HM on torus}. (Among other features, they have better behavior with respect to the flat connection on $\tG$)
. For example, one can put $g_1(x,\xi)=\exp(\oih \fxi);$ $g_2=\exp(\oih x)\exp(\oih \fx)$. The first is the same parallel transport in the $x$ direction at every point; the second is the parallel transport in the $\xi$ direction, but followed by multiplication by the element $\exp(\oih x)$ of the group $\tG$ at the point $(x,\xi).$

This choice of notation works well for us, in particuar when we consider the case of the torus. But it has its disadvantages. For example, it is a little counterintuitive that elements $\exp(\oih \fx)$ and $\exp (\oih \fxi)$ commute, as do $\exp(\oih \fx)$ and $\exp( \frac{1}{2i\hbar}\fxi ^2).$ Perhaps it would be more consistent to denote parallel translations by $\exp(\oih l({\underline \fx}, {\underline \fxi}))$ and to distinguish ${\underline \fx}, {\underline \fxi}$ from $\fx, \fxi.$ On the other hand, our notation is convenient to describe the summands of \eqref{eq:theta intro} below.
\end{remark}
An {\em oscillatory module} on $M$ is a bundle $\V$ with a flat connection $\nabla_\V$ on which $\A_M$, $\tg_M$, and $\tG_M$ act compatibly with each other and with the connections. In other words, $\V$ is a bundle of $\A_M[\hbar^{-1}]$-modules together with an isomorphism $T_g: \V_y \isomoto \V_x$ for any $g\in \tilde{G}_{x,y}$, and a flat connection $\nabla_{\V}$, subject to various compatibilities. Intuitively, an oscillatory module may be viewed as a module over the ``algebra'' of expressions $\sum \oih S(x,\xi,\hbar))a(x,\xi,\hbar).$

Another intuitive way to understand oscillatory modules is via the framework of Connes' noncommutative geometry. It is well understood within this framework that the role of the ring of functions on a quotient of a space by a group is played by the cross product of the group by the algebra of functions on the space \cite{C}, \cite{CM}. In case of a good action, this algebra is Morita equivalent to the algebra of functions invariant under the action of the group, that is of functions on the quotient space. Therefore one can say that an oscillatory module $V$ is a module over the deformation quantization of the noncommutative geometric quotient of $M$ by all (quantized) symplectomorphisms.

It is natural to try to construct an oscillatory module globalizing the construction of the algebraic Weil representation $\caH$ above. Indeed, it is easy to construct a bundle of $\A_M$-modules with a fiber $\caH$ and a connection; however, the curvature of the connection is not zero but $\oih \omega$ (for a specific choice of $(\A_M, \nabla_{\mathbb A})$). This is a direct consequence of the Fedosov theory. What we can do is to pass to a {\em twisted bundle} $\caH_M$ (in \ref{ss:The twisted bundle HM}). By definition, a twisted bundle is given by a \v{C}ech one-cochain ${\bf g}_{\alpha\beta}:U_\alpha\cap U_{\beta}\to \tG$ satisfying
$${\bf g}_{\alpha\beta}{\bf g}_{\beta\gamma}= \exp(\oih c_{\alpha\beta\gamma}){\bf g}_{\alpha\gamma}$$
where $c_{\alpha\beta\gamma}$ is a \v{C}ech two-cocycle with constant coefficients representing the class of the symplectic form $\omega$.

Note that this definition is very similar to the definition of a flat gerbe (\cite{Brylinski}); for a relation between gerbes and the Fedosov construction, cf. \cite{Deligne}.

For twisted bundles, two notions from the theory of usual vector bundles still make sense. One is the notion of a flat connection (because the factor $\exp(\oih c_{\alpha\beta\gamma})$, being locally constant, preserves any connection). The other is the groupoid of homotopy classes $(x_t\in M; \;g_t:\caH_{x_0}\lisomoto \caH_{x_t};\;x_0=x;\;x_1=y; \; g_0={\rm{id}}).$ (Note that neither an individual fiber $\caH_x$ nor the set of isomorphisms $\caH_x\lisomoto \caH_y$ makes any sense). Actually, we {\em define} our groupoid $\tG_M$ this way, in terms of $\caH_M.$

The twisted bundle satisfies all the axioms for an oscillatory module, except being an actual vector bundle. To construct oscillatory modules from it, we use a cut-and-paste procedure starting with a Lagrangian submanifold (cf. \ref{ss:Oscillatory modules and Lagrangian submanifolds}).

For a Lagrangian submanifold $L$, note that $\omega|L$ is exact by definition. Therefore we can choose a primitive one-cochain for a \v{C}ech cocycle representing $\omega|L$ and make $\caH_M|L$ a vector bundle with a flat connection. We define a groupoid $\tP_L$ together with its morphism to $\tG_M|L.$ This groupoid consists of classes of $(x_t, g_t)$ such that the linear part of $g_t$ preserves the tangent space to $L$.  We also define a subbundle $\bV_L$ of $\caH_M$ that is stable under $\A_M|L,$ $\tg_M|L,$ and $\tP_L.$

\begin{remark} \label{rmk:NT and Hormander} In \cite{NT}, we studied an asymptotic version of the ${\mathcal D}$-module of Lagrangian distributions whose wave front is a given Lagrangian submanifold of the cotangent bundle $M=T^*X$. The bundle $\bV_L$ is a direct generalization of this construction.
\end{remark}

To assign to a Lagrangian submanifold $L$ an oscillatory module $\V_L,$ one can use the induction procedure familiar from representation theory and define
$$\V_{L}=\tG_M\times _{\tP_L}\bV _L$$
or, more precisely,
$$(\V_{L})_{x}=\{\sum _{y\in L,\, g_{x,y}\in \tG_{x,y},\, v_y\in (\bV_L)_y}g_{x,y}v_y\}/\sim$$
where $(g_{x,y}p_{y,z})v_z\sim g_{x,y}(p_{y,z}v_z)$ for $p_{y,z}\in \tP_{y,z}.$ The sums we are considering are finite, or rather, after some simple completion, countable.

\begin{remark}\label{rmk:Gin, et al} This construction passes from a ${\mathcal D}$-module-like object ${\mathbb V}_L$ to an induced module $\V_L$ on which quantized Hamiltonian symplectomorhisms $\exp(\oih S)$ act. This reminds of a construction that passes from a ${\mathcal D}$-module on $\{f\neq 0\}$ to a new module on which expressions $f^s$ act (cf. \cite{Gin}). It would be interesting to compare the two objects, as well as to compare our work to \cite{Kara} and related works. It would also be useful to compare our groupoid $\tG_M$ to quantized symplectomorphisms that appear, for example, in \cite{FG}, \cite{KS}, and \cite{Ta}, as well as to the star exponentials from \cite{BdM} and \cite{DJ}. Another question that we intend to study is what happens when the condition $c_1(TM)=0$ is removed, and how to relate our constructions to the works \cite{BdMG} and \cite{Guil}.
\end{remark}

Let us illustrate the construction of the module $\V_L$ with an example. Put $M={\mathbb T}^2$ with periodic coordinates $x,\xi$ and the standard symplectic structure $\omega=d\xi dx.$

Recall that the Fukaya category of a compact symplectic manifold with $c_1(M)=0$ is an $A_\infty$ category whose objects are Lagrangian submanifolds with some extra data, cf. \cite{Fu}, \cite{FOOO}. The homotopy category of this $A_\infty$ category is a category with the same objects. In the case of the standard two-torus, this category was computed by Polishchuk and Zaslow in \cite{PZ}. They showed that it is equivalent to the derived category of coherent sheaves on the mirror dual complex torus. This is a partial case of the homological mirror symmetry conjecture of Kontsevich \cite{KHMS}.

Let $L_m$ be the Lagrangian submanifold $\xi=mx.$ Sections of the oscillatory module $\V_{L_m}$ are expressions $\sum_{k\in \Z} \exp((\xi-mx-k)\fx)f_k(x,\xi)$ where $f_k$ are two-periodic functions with values in $\caH.$ We will show that morphisms of oscillatory modules $\V_{L_m}\to \V_{L_{m+n}},\, n>0,$ compose similarly to morphisms in the Fukaya category of $M$, via Polishchuk and Zaslow's mirror symmetry. In fact, we construct such morphisms as operators of multiplication by
\begin{equation}\label{eq:theta intro}\sum_{\nu\equiv a\;{\operatorname{mod}}\frac{1}{n}\Z}e^{-\frac{n}{2i\hbar}(x+\fx+\nu)^2}
\end{equation}
Here
$$e^{-\frac{n}{2i\hbar}(x+\fx+\nu)^2}=e^{-\frac{n}{i\hbar}(x+\nu)\fx}
e^{-\frac{n}{2i\hbar}\fx^2
-\frac{n}{2i\hbar}(x+\nu)^2}$$
is an element of $\tG$ that is understood as in \eqref{eq:Slin, nl} above.
The expressions $\theta _a$ are, formally, linear combinations of theta functions of level $n$, cf. \cite{Theta}, with $\tau=\hbar$ and $z=x+\fx,$ after an application of the Poisson summation formula and the substitution $\tau\mapsto -\frac{1}{\tau}.$ Therefore, their composition satisfies the same identities as multiplication of theta functions and as composition in the Fukaya category of $\T^2$ (because of the mirror symmetry for the torus, cf. \cite{PZ}). Note that D. Tamarkin has a version of his sheaf-theoretic category on the torus that coincides with the Fukaya category via Polishchuk-Zaslow \cite{Ta2}.

From this example it looks like the notion of an oscillatory module is closer to the Fukaya category than the usual notion of a module over a deformed algebra; it may be also helpful for studying the B-side. This, together with our ongoing work on the index theorem for Fourier integral operators \cite{LNT} and a general project to study deformations and modules over them in relation to index theory (\cite{bgnt1}, \cite{bgnt2}, \cite{bgnt3}, \cite{bgnt4}, \cite{DAP}, \cite{KS}, \cite{KS1}, \cite{KS2}, ), is the motivation behind the present work.

\begin{remark}\label{rmk:history}
The idea to relate Lagrangian intersections to deformation quantization was communicated to the author by Boris Feigin in mid eighties, before either the invention of the Fukaya category or the more recent advances in deformation theory. The first work on a relation between $D$-modules and the Fukaya theory is Bressler and Soibelman's article \cite{BS}. This relation is advanced in \cite{KW} with applications to a physical interpretation of the geometric Langlands program. A rigorous expression of the Fukaya theory of $T^*X$ in terms of the category of constructible sheaves on $X$ is contained in \cite{NZ}. Further work in this direction was done in \cite{FW}, \cite{GW}, \cite{N}, \cite{Zaslow et al 1}, \cite{Zaslow et al 2}. In \cite{Ta}, another version of the sheaf-theoretical category on $T^*X$ is proposed, together with some hints about how such a category could be glued together from Darboux charts to a symplectic manifold. This category is related to the Fukaya category; it is used to prove non-displaceability results for some Lagrangian submanifolds and other compact subsets. Hopefully, oscillatory modules will provide a partner for Tamarkin's sheaf-theoretical objects under some sort of a Riemann-Hilbert correspondence. Or, perhaps, there is a theory for complex manifolds that is locally based on neither sheaves nor ${\mathbb A}$-modules but on elliptic pairs in the spirit of \cite{SS}.
\end{remark}
{\bf Aknowledgements} I have greatly benefited from regular discussions with A. Getmanenko and D. Tamarkin. Another great benefit was an opportunity to present a very crude early version of this work at several meetings of the joint Northwestern-UChicago seminar. I am grateful to all participants, in particular to A. Beilinson, R. Bezrukavnikov, D. Ben-Zvi, V. Drinfeld, D. Gaitsgory, E. Getzler, D. Nadler, K. Vilonen and E. Zaslow. I am grateful to R. Bezrukavnikov, P. Bressler, K. Fukaya, A. Goncharov, A. Gorokhovsky, D. Kaledin, M. Kashiwara, D. Kazhdan, R. Nest, Y. Soibelman, A. Szenes, and especially B. Feigin and A. Karabegov
for fruitful discussions and suggestions. I would like to thank the referee for a careful reeding of the manuscript and for many useful suggestions.

\section{Deformation quantization of a formal neighborhood}
\subsection{Deformation quantization algebra and its symmetries}\label{ss:Deformation quantization of a formal neighborhood} Put
$${{{\A}}}={\mathbb C}[[\fx_1, \ldots, \fx_n, \fxi_1, \ldots, \fxi_n, \hbar]]$$
with the Moyal-Weyl product;
$${\mathfrak g}=\Der_{\cont}(\A)=\frac{1}{i\hbar}\A/\frac{1}{i\hbar}{{\mathbb{C}}[[\hbar]]};\;\;{\widetilde{\mathfrak g}}=\frac{1}{i\hbar}\A$$
Put $H={\rm{Sp}}(2n);$ ${\mathfrak h}={\mathfrak{sp}}(2n);$ introduce the grading
\begin{equation}\label{eq:grading}
|\fx_i|=|\fxi_i|=1;\; |\hbar|=2.
 \end{equation}
 We can identify ${\mathfrak g}_0$ with ${\mathfrak{sp}}(2n, \C).$ One has a central extension
\begin{equation}\label{eq:extension g}
0\to\oih\C[[\hbar]]\to\tg\to\g\to 0,
\end{equation}
as well as
\begin{equation}\label{eq:grading g}
\g=\prod_{i=-1}^\infty \g_i;\;\tg=\prod_{i=-2}^\infty \tg_i.
\end{equation}
Any continuous automorphism $g$ of $\A$ induces a symplectic linear transformation $g_0$ of $\C^{2n}.$ Denote by $G$ the group of those $g$ whose linear part $g_0$ preserves the real structure. We have
\begin{equation}\label{eq:G semi dir}
G=\Sp(2n,\R)\ltimes \exp (\g_{\geq 1})
\end{equation}
Define the central extension
\begin{equation}\label{eq:tG semi dir}
\tG=\exp(\oih \C\oplus \C)\times \tSp(2n,\R)\ltimes \exp (\tg_{\geq 1})
\end{equation}
One has an exact sequence
\begin{equation}\label{eq:extension G}
1\to \Z\times \exp(\oih\C[[\hbar]])\to\tG\to G\to 1
\end{equation}
\begin{remark} \label{rmk:Lie algebra vs Lie group}
Note that the Lie algebra of $G$ is not $\g$ but the Lie subalgebra $\g_{\geq 0}.$ The Lie algebra of $\tG$ is not $\tg$ but rather $\tg_{-2}+\tg_{\geq 0}.$ This should be kept in mind throughout the paper. For example, we will consider $\g$-valued or $\tg$-valued connections in $G$- or $\tG$-bundles. Later, when we globalize these constructions on a symplectic manifold $M$, the bundle of Lie algebras $\tg_M$ will play the role of a Lie algebra of the groupoid $\tG_M.$
\end{remark}
Define also $P$ to be the subgroup of $G$ consisting of elements $g$ whose linear part preserves the Lagrangian subspace
\begin{equation}\label{eq:L0}
L_0=\{\fxi_1=\ldots=\fxi_n=0\}
\end{equation}
Let $\tP$ be the preimage of $P$ in $\tG$.
\subsection{The algebraic Weil representation}\label{ss:The algebraic Weil representation} Let $\fy=(\fy_1,\ldots,\fy_n)$ be $n$ formal variables. For a symmetric real $n\times n$ matrix $a$, put
\begin{equation}\label{eq:Ha}
\caH_a^{\fy}=\exp(\frac{a\fy^2}{2i\hbar})\fC[[\fy, \hbar]]((e^{\frac{c}{i\hbar}}|c\in \C))
\end{equation}
Here
\begin{equation}
\fC[[\fy, \hbar]]=\{\sum_{k=-N}^\infty v_k|v_k\in \C[[\fy]]((\hbar))_k\}
\end{equation}
with respect to the grading \eqref{eq:grading}; for any vector space $V,$ we define
\begin{equation} \label{eq:completion}
V((e^{\frac{c}{i\hbar}}|c\in \C))=\{\sum_{k\in {\mathbb N};{\rm{Re}}(c_k)\to+\infty}e^{\frac{c_k}{i\hbar}}v_k\},
\end{equation}
$v_k\in V.$
In particular, the multiplication by $h$ is automatically invertible.

Note that the completion as in \eqref{eq:completion} is not necessary for ${\mathbb A},$ $\tg,$ and $\tG$ to act on $\caH$. But without it, expressions as in \eqref{eq:theta function 2} would not exist.

Put also
\begin{equation}
\caH^\fy=\oplus_a \caH^\fy_a
\end{equation}
For a nondegenerate $a$, define the Fourier transform (cf. \cite{K})
\begin{equation}\label{eq:Fourier}
F: \caH_a^{\fy}\isomoto\caH_{-a^{-1}}^\feta
\end{equation}
as follows. Heuristically,
\begin{equation}
(Ff)(\feta)=\frac {e^{-\frac{\pi i n}{4}}}{(2\pi i \hbar)^{n/2}}
\int e^\frac{\fy\feta}{i\hbar}f(\fy)d\fy;
\end{equation}
To give the above formula a rigorous meaning, put
$$
F(f(\fy)\exp(\frac{a\fy^2}{2i\hbar}))(\feta)
=f(i\hbar\frac{\partial}{\partial\feta})F(\exp(\frac{a\feta^2}{2i\hbar}))=
$$
$$
f(i\hbar\frac{\partial}{\partial\feta})\frac{e^{-\frac{\pi i n}{4}}}{\det(\sqrt{ia})}
\exp(\frac{-a^{-1}\feta^2}{2i\hbar})
=\frac{e^{-\frac{\pi in}{2}}(-1)^{p(a)}}{\det\sqrt{|a|}}f(i\hbar\frac{\partial}{\partial\feta})\exp(\frac{-a^{-1}\feta^2}{2i\hbar})
$$
Here $p(a)$ is the number of positive eigenvalues of $a$. We used the branch of the square root for which $\sqrt(ix)>0$ if $ix>0$; it is defined defined on the complex plane with the line $\{ix<0, \; x\in \R\}$ removed. The final term in the above chain of equalities can be viewed as the definition of the first term.

The definition of the Fourier transform $F$ extends to elements of the form \begin{equation}
{\bf f}(\fy)=\exp(\frac{a\fy^2}{2\pi i\hbar}+i\fy\fz)f(\fy)
\end{equation}
where $a$ is nondegenerate and $\fz$ is another formal parameter:
\begin{equation}\label{eq:Fourier with a shift}
F({\bf f})(\feta)=F(\exp(\frac{a\fy^2}{2i \hbar})f(\fy))(\feta+\fz)
\end{equation}

One has
\begin{equation}\label{eq:Fourier props}
F^2=-1;\;F\fy F^{-1}=i\hbar\frac{\partial}{\partial\feta};\;Fi\hbar\frac{\partial}{\partial\fy}F^{-1}=-\feta
\end{equation}

Now define
\begin{equation}\label{eq:alg Weil}
\caH=\sum_{I\coprod J=\{1,\ldots, n\}}\caH^{\fy_{I,J}}/\sim
\end{equation}
Here $\fy_{I,J}=(\fx_I, \fxi_J).$ The equivalence relation is defined as follows. Let
\begin{equation}\label{eq:element of H}
{\bf f}(\fx_{I_1}, \fx_{I_2}, \fxi_J)=\exp({\frac{S(\fx_{I_1}, \fx_{I_2}, \fxi_J)}{2i\hbar}})f(\fx_{I_1}, \fx_{I_2}, \fxi_J)\in \caH^{\fy_{I_1\coprod I_2, J}}
\end{equation}
 Here $S$ is a quadratic form. We assume it becomes a nondegenerate form in $\fx_{I_2}$ if we put $\fx_{I_1}=\fxi_J=0.$ We declare the following two elements equivalent:
 \begin{equation}\label{eq:element of H 1}
{\bf f}(\fx_{I_1}, \fxi_{I_2}, \fxi_J)=\exp({\frac{S(\fx_{I_1}, \fxi_{I_2}, \fxi_J)}{2i\hbar}})f(\fx_{I_1}, \fxi_{I_2}, \fxi_J)\in \caH^{\fy_{I_1, I_2\coprod  J}}
\end{equation}
and
\begin{equation}
F_{I_2}{\bf f}(\fx_{I_1}, \fxi_{I_2}, \fxi_J)\in \caH^{\fy_{I_1\coprod I_2, J}}
\end{equation}
Here $F_{I_2}$ is the Fourier transform from functions of $\fx_{I_2}$ to functions in $\fx_{I_2}$, as in \eqref{eq:Fourier with a shift}. Our equivalence relation is the minimal one for which all the pairs as above are equivalent.
\subsubsection{The action of $\A$ on $\caH$}\label{sss:The action of A on H} The algebra $\A$ acts on the space $\caH$ as follows. Recall that
$$\fy_{I,J}=(\fx_k|k\in I; \; \fxi_j |j\in J).$$
On the summand $\caH^{\fy_{I,J}}$, $\fx_k$ acts by multiplication if $k\in I$ and by $-i\hbar\frac{\partial}{\partial \fxi_k}$ otherwise; $\fxi_j$ acts by multiplication if $j\in J$ and by $i\hbar\frac{\partial}{\partial \fx_j}$ otherwise. It is clear that this definition preserves the equivalence relation.
\subsubsection{The action of $\tg$ on $\caH$}\label{sss:The action of g on H} The Lie algebra $\tg$ acts on $\caH$ exactly as above, namely, $\tg=\oih \A,$ and the action of $\hbar$ on $\caH$ is invertible.
\subsubsection{The action of $\tG$ on $\caH$}\label{sss:The action of G on H} The subgroup $\exp(\oih \C\oplus \C)$ acts on $\caH$ in the obvious way; the subgroup $\exp (\tg_{\geq 1}),$ via the action of $\tg$ defined above. Indeed, for any $X\in \tg_{\geq 1},$ the exponential of the operator $h\mapsto Xh$ converges as an operator on $\caH$. Finally, $\tSp(2n,\R)$ acts exactly by the same formula as the usual metaplectic representation:
\begin{equation}\label{eq:Weil rep} \left [ \begin{array}{cc}
1&0\\
a&1\end{array}\right ] \mapsto \exp(-\frac{a\fx^2}{2i\hbar});
\;
\left [ \begin{array}{cc}
0&1\\
-1&0\end{array}\right ] \mapsto F;
\end{equation}
\begin{equation}\label{eq:Weil rep 1}
\left [ \begin{array}{cc}
b&0\\
0&^tb^{-1}\end{array}\right ] \mapsto T_b;\;(T_bf)(x)=\frac{1}{\sqrt{\det(b)}}f(b^{-1}x)
\end{equation}
The meaning of the above operators is as follows. The Fourier transform $F$ sends ${\bf f}(\fx_I, \fxi_J)\in \caH^{\fy_{I,J}}$ to ${\bf f}(\fxi_I, -\fx_J)\in \caH^{J,I}.$ The operator $\exp(-\frac{a\fx^2}{2i\hbar})$ extends from $\caH^\fx$ to all of $\caH$ as follows:
$$
\exp(-\frac{a\fx^2}{2i\hbar})=\exp( \oih a_{II}\fx_I^2-2a_{IJ}\fx_I\frac{\partial}{\partial \fxi_J}+i\hbar a_{JJ}(\frac{\partial}{\partial \fxi_J})^2)
$$
on $\caH^{\fy_{I,J}}.$ This is clearly a well-defined operator. Similarly, If $b$ is in the connected component of the identity, then
\[ b=\exp
\left [ \begin{array}{cc}
c_{II} & c_{IJ}\\
^tc_{JI} & c_{JJ}
\end{array} \right ]
\]
Put $a\circ b=\frac{1}{2}(ab+ba).$ Define
$$T_b=\exp(\frac{1}{2}(c_{II}\fx_I\circ \frac{\partial}{\partial \fx_I}+\oih c_{IJ}\fx_I \fxi_J-i\hbar \frac{\partial}{\partial \fx_I}\frac{\partial}{\partial \fxi_J}+c_{JJ}\fxi_J \circ\frac{\partial}{\partial \fxi_J})$$
on $\caH^{\fy_{I,J}}.$ This, too, is a well-defined operator. It remains to define $T_b$ for one $b$ that is not in the connected component of the identity, namely a diagonal matrix with one entry equal to $-1$ and the rest equal to $1$. Depending on whether the entry $-1$ is in the $I$ or $J$ group, we define $T_b$ exactly as in \eqref{eq:Weil rep 1} above, with respect to the variables $\fx_I$ or to $\fxi_J.$
\begin{remark}\label{rmk:orbit}
The construction of $\caH$ mimics very closely the construction of the orbit of $1$ under the action of $\tSp(2n)\ltimes \C[\fx, \fxi]$ on the space of distributions via differential operators and the standard metaplectic representation. The vector ${\bf 1}_{I,J}$ in $\caH^{\fy_{I,J}}$ stands for the partial Fourier transform $F_I(1).$
\end{remark}
\begin{lemma}\label{lemma:weil and lagrangian}
Assign to $\exp({\frac{ a\fy_{I,J}^2}{2i\hbar}})f(\fy_{I,J})\in \caH^{\fy_{I,J}}$
the Lagrangian subspace
$$
\fxi_I=a_{II}\fx_I+a_{IJ}\fxi_J
$$
$$
\fx_J=-^ta_{IJ}\fx_I-a_{JJ}\fxi_J
$$
This is a well-defined map $\coprod\caHIJ\to \Lambda (n)$ where $\Lambda(n)$ is the Grassmannian of Lagrangian subspaces in $\R^{2n}.$ The space $\caH$ is identified with the space of finitely supported sections of a $\tG$-equivariant vector bundle on $\Lambda(n).$
\end{lemma}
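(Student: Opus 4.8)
The plan is to read the two equations in the statement as the generating-function description of a Lagrangian subspace, and to read the equivalence relation defining $\caH$ as the Legendre-transform identifications between overlapping generating-function charts on $\Lambda(n)$.

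Fix the symplectic form $\sum d\fxi_i\wedge d\fx_i$ on $\R^{2n}$. For a partition $I\coprod J=\{1,\dots,n\}$ let $U_{I,J}\subset\Lambda(n)$ be the set of Lagrangian subspaces transverse to the coordinate Lagrangian $\{\fx_I=0,\ \fxi_J=0\}$; such a subspace is exactly a graph over the $(\fx_I,\fxi_J)$-plane, and it is Lagrangian if and only if it is the graph $\fxi_I=\partial_{\fx_I}\phi$, $\fx_J=-\partial_{\fxi_J}\phi$ of a generating function of the second kind $\phi$, here necessarily the quadratic form $\phi=\tfrac12\,a\,\fy_{I,J}^2$. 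Since $a$ is symmetric this is precisely the pair of equations in the statement (the transpose in the second equation being $\partial_{\fxi_J}$ of the mixed term), so the assignment lands in $\Lambda(n)$; as $a$ is recovered from an element of $\caHIJ$ as the quadratic part of the exponent, the map $\coprod\caHIJ\to\Lambda(n)$ is well defined. One records at the same time that $\caHIJ=\oplus_a\caH_a^{\fy_{I,J}}$ is the space of finitely supported sections over $U_{I,J}$ of the bundle whose fibre over the Lagrangian of slope $a$ is $\caH_a^{\fy_{I,J}}$, and that the $U_{I,J}$ cover $\Lambda(n)$ because every Lagrangian subspace is transverse to at least one of the coordinate Lagrangians $\{\fx_I=0,\ \fxi_J=0\}$.

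The heart of the argument is that this map descends to $\caH$, i.e.\ respects $\sim$; since $\sim$ is generated by Fourier-transform pairs, it suffices to treat one. If ${\bf f}=\exp\!\big(\tfrac{S(\fx_{I_1},\fxi_{I_2},\fxi_J)}{2i\hbar}\big)f\in\caH^{\fy_{I_1,I_2\coprod J}}$, with $S$ nondegenerate in $\fxi_{I_2}$ after setting $\fx_{I_1}=\fxi_J=0$, then by \eqref{eq:Fourier}, \eqref{eq:Fourier with a shift} and \eqref{eq:Fourier props} the element $F_{I_2}{\bf f}$ has the form $\exp\!\big(\tfrac{\widetilde S(\fx_{I_1},\fx_{I_2},\fxi_J)}{2i\hbar}\big)\widetilde f\in\caH^{\fy_{I_1\coprod I_2,J}}$ with $\widetilde S$ the partial Legendre transform of $S$ in the $I_2$-variables. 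A block-matrix computation then shows that the Lagrangian attached to $\widetilde S$ for the partition $(I_1\coprod I_2,J)$ equals, as a subspace of $\R^{2n}$, the one attached to $S$ for the partition $(I_1,I_2\coprod J)$ --- conceptually because moving a block of coordinates from the ``$\fx$'' side to the ``$\fxi$'' side of a generating function is a Legendre transform, which does not change the graph of $d\phi$ --- and the nondegeneracy hypothesis is exactly what places both partitions on the chart overlap where this transition is defined. Hence ${\bf f}\mapsto$ (its Lagrangian) factors through $\caH$, and $\caH$ is exhibited as the finitely supported sections of the bundle glued from the $\caHIJ$ along the partial Fourier transforms; the cocycle relations on triple overlaps follow from $F^2=-1$ together with the commutation of partial Fourier transforms in disjoint index sets (the metaplectic sign $F^2=-1$ being the basic reason the natural symmetry group is $\tG$, not $G$).

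For the $\tG$-equivariant structure, let $\tG$ act on $\Lambda(n)$ through $\tG\to G\to\Sp(2n,\R)$ by the linear-part action on the Grassmannian of Lagrangians, and verify on generators that the map intertwines the $\tG$-actions: $\exp(\oih\C\oplus\C)$ acts trivially on $\Lambda(n)$ and scales fibres; the operators from $\exp(\tg_{\geq1})$ preserve each $\caH_a^{\fy_{I,J}}$ (they only multiply or differentiate by the $\fy$-variables) and have trivial linear part, so they fix the assigned Lagrangian; and for $\tSp(2n,\R)$ the formulas \eqref{eq:Weil rep}--\eqref{eq:Weil rep 1} are arranged precisely so that $\exp(-\tfrac{a\fx^2}{2i\hbar})$, $F$ and $T_b$ act on a Gaussian exponent by the action of the corresponding symplectic matrix on its Lagrangian, which is a direct matrix check on a generating set of $\Sp(2n,\R)$. (Alternatively, since $\Sp(2n,\R)$ acts transitively on $\Lambda(n)$ with $L_0$ of \eqref{eq:L0} having stabiliser $P\cap\Sp(2n,\R)$, one may identify the bundle with the induced bundle over $\tSp(2n,\R)/(\tP\cap\tSp(2n,\R))$ attached to the fibre over $L_0$, cf.\ Remark \ref{rmk:orbit}, and read off equivariance from there.) The main obstacle is the block-matrix bookkeeping in the third paragraph --- tracking signs and which block of $S$ becomes which block of $\widetilde S$ under the partial Legendre transform --- but nothing conceptual lies beyond it.
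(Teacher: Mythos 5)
Your proposal is correct and follows essentially the same route as the paper's proof: the charts $U_{I,J}$ of symmetric matrices covering $\Lambda(n)$, the check that the Fourier-transform equivalence defining $\caH$ intertwines with the chart transitions, and equivariance verified on the generators in \eqref{eq:Weil rep}--\eqref{eq:Weil rep 1} (with the homogeneous-space description $\Lambda(n)\isomoto\tG/\tP$ as the organizing principle). Your generating-function/Legendre-transform gloss is simply a worked-out version of the paper's ``one checks directly.''
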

\begin{proof} The Lagrangian Grassmannian is a homogeneous space of $\tG$ via the projection $\tG\to \tSp\to \Sp.$ In fact,
$$\Lambda(n)\isomoto \tG/\tP.$$
The above formulas turn the manifold $\Lambda(n)$ into the union of charts indexed by $I, \,J$:
 \begin{equation} \label{eq:Gr via charts}
 \Lambda(n)=\coprod _{I,J} U_{I,J}/\sim
 \end{equation}
 where each chart $U_{I,J}$ is the space of real symmetric $n\times n$ matrices. One checks directly that the equivalence from the definition of $\caH$ intertwines with the equivalence in \eqref{eq:Gr via charts}, and the action of $\tG$ on $\caH$ intertwines with the action of $\tG$ on $\Lambda(n)$ via the projection $\tG\to \Sp$. The latter statement is enough to check on generators of $\tSp$ as in formulas \eqref{eq:Weil rep}, \eqref{eq:Weil rep 1}.
\end{proof}

\begin{lemma}
The lines $\C \cdot {\bf 1}_{I,J}\subset \caHIJ$ form a line subbundle of $\caH$ which is isomorphic to the bundle on $\Lambda(n)$ determined by the \v{C}ech one-cohomology class $(-1)^{\mu _L}$ where $\mu_L$ is the generator of $H^1(\Lambda(n), \Z)$ (the Maslov class).
\end{lemma}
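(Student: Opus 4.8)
The plan is to reinterpret this line subbundle over the Lagrangian Grassmannian, check that it is flat, and then pin down its class by computing the monodromy around the generator of $\pi_{1}$.

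\emph{Reformulation.} First I would use Lemma \ref{lemma:weil and lagrangian} to view $\caH$ as the space of finitely supported sections of a $\tG$-equivariant bundle $\mathcal W$ on $\Lambda(n)=\tG/\tP$, trivialized over the chart $U_{I,J}$ of real symmetric matrices by $a\mapsto\exp(\frac{a\fy_{I,J}^{2}}{2i\hbar})f(\fy_{I,J})$. In this frame the vector ${\bf 1}_{I,J}$ (which by Remark \ref{rmk:orbit} equals $F_{I}\cdot 1$) is the value at $a=0$, i.e.\ at the coordinate Lagrangian, of the nowhere-vanishing local section $\sigma_{I,J}\colon a\mapsto\exp(\frac{a\fy_{I,J}^{2}}{2i\hbar})\cdot 1$. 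These sections patch to a $\tG$-equivariant line subbundle $\caL\subset\mathcal W$ — equivalently, $\caL$ is generated under the transitive $\tG$-action by the line $\C\cdot 1\subset\caH^{\fx}$ sitting over $L_{0}\in\Lambda(n)$, of which all the lines $\C\cdot{\bf 1}_{I,J}$ are $\tG$-translates. So $\caL$ is classified by the character $\chi\colon\tP\to\C^{*}$, $p\mapsto(p\cdot 1)/1$, of the stabilizer $\tP$ of $L_{0}$; from \eqref{eq:Weil rep 1} one reads off that $\chi$ restricted to the Levi $\GL(n,\R)\subset P$ is $b\mapsto(\det b)^{-1/2}$, while $\chi$ is trivial on the unipotent radical and tautological on $\exp(\oih\C\oplus\C)$.

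\emph{Flatness.} Next I would show $\caL$ is flat. Using $U(n)\subset\Sp(2n,\R)$ to present $\Lambda(n)=U(n)/O(n)$ as a symmetric space, $\caL$ becomes $U(n)$-equivariant, determined by the character of the relevant double cover of $O(n)$ induced by $\chi$. Its differential is a Lie-algebra character of $\mathfrak o(n)$; by the computation above it equals $-\frac{1}{2}\operatorname{tr}$, which vanishes on skew-symmetric matrices. Since $(\mathfrak u(n),\mathfrak o(n))$ is a symmetric pair, the curvature of the canonical invariant connection on $\caL$ is, at the base point, $(X,Y)\mapsto -d\chi([X,Y])$ with $X,Y$ in the odd summand and $[X,Y]\in\mathfrak o(n)$; as $d\chi$ vanishes on $\mathfrak o(n)$ this is $0$, so $\caL$ is flat. (One can also see this by hand: on an overlap of charts $U_{I,J},U_{I',J'}$ whose index sets differ by moving a subset $I_{2}$ between the two groups, the equivalence relation of \eqref{eq:alg Weil} identifies $\sigma_{I,J}$ and $\sigma_{I',J'}$ via the partial Fourier transform $F_{I_{2}}$, and \eqref{eq:Fourier} gives a transition factor of the shape $\dfrac{e^{-\pi i|I_{2}|/2}(-1)^{p(a_{I_{2}})}}{\det\sqrt{|a_{I_{2}}|}}$, whose modulus is a positive smooth function removable by rescaling the frames, after which the cocycle is locally constant.)

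\emph{Monodromy.} Finally, since a flat line bundle on the connected manifold $\Lambda(n)$ is determined by its monodromy $\pi_{1}(\Lambda(n))\cong\Z\to\C^{*}$, I would evaluate this on the standard generator $\gamma$: the loop that rotates the first symplectic plane $(\fx_{1},\fxi_{1})$ through $\theta\in[0,\pi]$ while fixing the coordinate Lagrangian in the remaining $n-1$ planes (this has Maslov index $\pm1$ and is the image of the generator of $\pi_{1}(\Lambda(1))$). It lifts to the path $\theta\mapsto R^{(1)}_{\theta}\oplus\Id$ in $\Sp(2n,\R)\subset\tG$, where $R^{(1)}_{\theta}$ is rotation by $\theta$ in the first plane; by \eqref{eq:Weil rep} the element $R^{(1)}_{\pi/2}\oplus\Id$ acts on $\caH$ as the partial Fourier transform $F_{1}$, so the endpoint $R^{(1)}_{\pi}\oplus\Id$ acts as $F_{1}^{2}=-1$ (by \eqref{eq:Fourier props}), multiplying ${\bf 1}_{\{1,\dots,n\},\emptyset}=1$ by $-1$. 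Hence the monodromy of $\caL$ along $\gamma$ is $-1$, which identifies $\caL$ with the flat line bundle whose \v{C}ech class is the image of the generator $\mu_{L}$ of $H^{1}(\Lambda(n),\Z)$ under $\Z\to\{\pm1\}\subset\C^{*}$, that is, $(-1)^{\mu_{L}}$.

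\emph{Main obstacle.} The substance is entirely in the last step, in the bookkeeping of the metaplectic central extension: although $\gamma$ is a \emph{loop} downstairs in $\Lambda(n)$, its lift to $\tG$ is a genuine \emph{path} ending at the nontrivial element of the kernel of $\tG\to G$, and the content of the lemma is precisely that this element acts on the vacuum line $\C\cdot 1$ by $-1$ and not by $+1$ — the metaplectic/Maslov anomaly, made explicit by the identity $F^{2}=-1$ of \eqref{eq:Fourier props}. Everything else (that the rescaled transition functions are locally constant, and that the frames $\sigma_{I,J}$ respect the equivalence relation in \eqref{eq:alg Weil}) is routine and is already essentially carried out in the proof of Lemma \ref{lemma:weil and lagrangian}.
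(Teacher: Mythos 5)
The paper states this lemma without proof, so there is no authorial argument to compare against; your overall strategy --- describe the vacuum lines in the charts $U_{I,J}$ of Lemma \ref{lemma:weil and lagrangian}, normalize the Fourier-transform transition constants so that they become locally constant, and evaluate the resulting monodromy on the generator of $\pi_1(\Lambda(n))\cong\Z$ --- is the natural one and lands on the stated answer. But two of your steps need repair. The milder one: the vacuum lines do \emph{not} form a $\tG$-equivariant subbundle, and there is no character ``$p\mapsto (p\cdot 1)/1$'' of $\tP$, because $\exp(\tp_{\geq 1})\subset\tP$ does not preserve $\C\cdot 1$ (e.g.\ $\exp(\oih\fx^3)\cdot 1=1+\oih\fx^3+\dots$ stays in the fiber $\bV$ over $L_0$ but leaves the line). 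Only $\tSp(2n,\R)$ preserves the family of vacuum lines, with stabilizer $\tP_0$ as in \ref{sss:Metalinear structures}; your symmetric-space flatness argument should be run for the pair $(\tSp, \tP_0)$, or simply dropped in favor of your parenthetical \v{C}ech argument, which is correct and is all that is needed.

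The serious gap is in the monodromy step. The holonomy of the \v{C}ech-flat structure around $\gamma$ is the \emph{ratio} of the two normalized transition constants picked up at the two crossings of the chart boundary (one on each component of $U_{\{1\},\emptyset}\cap U_{\emptyset,\{1\}}$, i.e.\ at $a'>0$ and at $a'<0$), whereas $F^2$ is the \emph{product} $C(a)\,C(-a^{-1})$ of two such constants, again taken at arguments of opposite sign. Writing $C_{\pm}$ for the phase of the constant in \eqref{eq:Fourier} on $\{\pm a>0\}$, one finds $\hol(\gamma)=C_+/C_-$ while $F^2=C_+C_-$; these coincide only when $C_-^2=1$, i.e.\ only when the constant is real --- a normalization statement you neither make nor verify, and one that is sensitive to the factor $e^{-\pi i n/2}$ in \eqref{eq:Fourier} (taking that displayed constant literally gives $C_\pm=\pm i$, hence $\hol(\gamma)=-1$ as claimed but $F^2=+1$, so $F^2$ is computing the wrong invariant even though your final answer is right). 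Put differently, $\theta\mapsto R_\theta\cdot 1$ is not a flat section --- it differs from the chart frame by the nonconstant Gaussian scalar of the metaplectic action --- so the action of the endpoint $R_\pi$ on the fiber is not the parallel transport around the loop without a further argument that this scalar returns to $1$. The robust computation is the direct one: the phase of the transition constant jumps by $(-1)^{p(a')}$ as $a'$ changes sign across the Maslov cycle, so the holonomy is $(-1)^{\Delta p}=-1$, which is precisely the mod~$2$ Maslov index of $\gamma$.
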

\subsubsection{$(\tG, \tg, \A)$-modules}\label{sss:(G,g,A)-modules}
\begin{definition}\label{dfn:(G,g,A)-modules}
A $(\tG, \tg, \A)$ is a vector space $V$ equipped with an action of the group $\tG$, of the Lie algebra $\tg,$ and of the associative algebra $\A$ subject to
$$ g(Xv) = g(X)gv; g(av) = g(a)gv; X(av)=X(a)v+a(Xv)$$
for $g\in \tG, \; X\in \tg, \; a\in \A.$
\end{definition}
\begin{lemma} \label{lemma:H as a(G,g,A)-module}
The actions described in \ref{sss:The action of G on H}, \ref{sss:The action of G on H}, and \ref{sss:The action of G on H} turn $\caH$ into a $(\tG, \tg, \A)$-module.
\end{lemma}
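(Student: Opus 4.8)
The plan is to verify the three compatibility relations of Definition~\ref{dfn:(G,g,A)-modules}; that $\A$, $\tg$ and $\tG$ each act on $\caH$ --- in particular that the chart-wise formulas of \ref{sss:The action of A on H}--\ref{sss:The action of G on H} respect the equivalence relation \eqref{eq:alg Weil} --- has already been recorded above, the case of $\tG$ being essentially the content of Lemma~\ref{lemma:weil and lagrangian}. Throughout I write $\rho\colon\A[\hbar^{-1}]\to\End_{\C}(\caH)$ for the action of $\A$ (so that $\oih H\in\tg$ acts by $\rho(\oih H)=\frac{1}{i\hbar}\rho(H)$), and $W\colon\tG\to\GL(\caH)$ for the action of $\tG$.

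First, the relation $X(av)=X(a)v+a(Xv)$ for $X\in\tg$, $a\in\A$. Since $\tg=\oih\A$ acts on $\A$ by the inner derivations $a\mapsto[X,a]$, this is equivalent to $[\rho(X),\rho(a)]=\rho([X,a])$, i.e.\ to the assertion that $\rho$ is a homomorphism of associative algebras for the Moyal--Weyl product. On the chart $\caH^{\fy_{I,J}}$ the operators $\rho(\fx_k)$, $\rho(\fxi_j)$ are by construction multiplication by the variables occurring in $\fy_{I,J}$ and $\pm i\hbar\partial$ in the remaining variables; that these generate a representation of the Weyl algebra with the \emph{symmetric} (Moyal--Weyl) product is classical, and since $\fx_k$, $\fxi_j$, $\hbar$ topologically generate $\A$, this gives the homomorphism property. (Compatibility of $\rho$ with \eqref{eq:alg Weil} across charts is the statement, already invoked above, that the transforms $F_{I_2}$ of \eqref{eq:Fourier with a shift} intertwine the two descriptions, which rests on \eqref{eq:Fourier props}.)

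Next, the relation $g(av)=g(a)gv$, which says $W(g)\rho(a)W(g)^{-1}=\rho(\alpha_g(a))$, where $\alpha_g\in\Aut(\A)$ is induced by $g$ (and depends only on the image of $g$ in $G$). As functions of $a$, both sides are continuous algebra homomorphisms $\A\to\End_{\C}(\caH)$ --- the left because $\rho$ is one, the right because it equals $\rho\circ\alpha_g$ --- so it suffices to check the identity for $a=\fx_k,\fxi_j$ and for $g$ in a generating set of $\tG$: the elements of $\exp(\oih\C\oplus\C)$, the subgroups $\exp(\tg_{\geq 1})$, and the generators \eqref{eq:Weil rep}, \eqref{eq:Weil rep 1} of $\tSp(2n,\R)$. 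For $g\in\exp(\oih\C\oplus\C)$, $W(g)$ is a scalar and $\alpha_g=\Id$, so the identity is trivial. For $g=\exp X$ with $X\in\tg_{\geq 1}$ one has $W(g)=\exp\rho(X)$ (convergent on $\caH$ by the degree count of \ref{sss:The action of G on H}) and $\alpha_g=\exp(\operatorname{ad}X)$ on $\A$, so $W(g)\rho(a)W(g)^{-1}=\exp(\operatorname{ad}\rho(X))(\rho(a))=\rho(\exp(\operatorname{ad}X)(a))$ by the homomorphism property again. For the generators of $\tSp(2n,\R)$ it is the classical intertwining of the metaplectic representation: for the Fourier generator $F$ it is precisely \eqref{eq:Fourier props}, and for $\exp(-\frac{a\fx^2}{2i\hbar})$ and for $T_b$ it follows from the elementary identity $e^{-\phi}\partial_j e^{\phi}=\partial_j+(\partial_j\phi)$ and the chain rule, carried out chart by chart via the extensions of these operators to $\caH^{\fy_{I,J}}$ given in \ref{sss:The action of G on H}.

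Finally, $g(Xv)=g(X)gv$ follows from the relation just proved: for $X=\oih a\in\tg$, the operator by which $X$ acts on $\caH$ is $\frac{1}{i\hbar}\rho(a)$ and $g(X)=\oih\alpha_g(a)$, and both factors transform correctly under conjugation by $W(g)$ --- the scalar $\frac{1}{i\hbar}$ because $\hbar$ is fixed by $\alpha_g$, and $\rho(a)$ by $W(g)\rho(a)W(g)^{-1}=\rho(\alpha_g(a))$; in particular the central $\oih\C[[\hbar]]\subset\tg$ is fixed by $\tG$ and acts by scalars, so it causes no difficulty. I expect no conceptual obstacle here: the substantive inputs --- the Schr\"odinger operators form a representation of the Moyal--Weyl algebra, the metaplectic generators conjugate Weyl operators through the underlying linear symplectomorphism, and the relations \eqref{eq:Fourier props} --- are classical or already established. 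The only real work is the uniform bookkeeping across the charts $\caH^{\fy_{I,J}}$, including the verification (already behind Lemma~\ref{lemma:weil and lagrangian}) that the piecewise formulas defining $W(g)$ for $g\in\tSp(2n,\R)$ patch to well-defined operators on $\caH$ and that $W$ is a group homomorphism, the metaplectic cocycle being absorbed into the central extension $\tG$.
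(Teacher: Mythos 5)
The paper states this lemma without proof, treating it as a routine verification, so there is no argument of record to compare against; your write-up supplies exactly the check the paper leaves to the reader, and it is correct. You rightly reduce everything to two facts: that the chart-wise operators define an algebra homomorphism $\rho$ of the Moyal--Weyl algebra into $\End(\caH)$ (which yields the $\tg$--$\A$ compatibility, and by exponentiation the case $g\in\exp(\tg_{\geq 1})$), and that the metaplectic generators conjugate $\rho(\fx_k)$, $\rho(\fxi_j)$ through the underlying linear symplectomorphism. The one input you defer --- that the formulas \eqref{eq:Weil rep}, \eqref{eq:Weil rep 1} patch across the charts $\caHIJ$ into an honest homomorphism $\tSp(2n,\R)\to\GL(\caH)$ --- is the algebraic avatar of the classical metaplectic representation and is likewise only asserted, not proved, in \ref{sss:The action of G on H}, so deferring it introduces no gap relative to the paper.
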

\subsubsection{The Weil representation as an induced module}\label{sss:The Weil representation as an induced module}
Let
\begin{equation}\label{eq:V}
{\mathbb V}=\fC[[\fx, \hbar]]((e^{\frac{c}{i\hbar}}|c\in \C))
\end{equation}
(in the notations of \eqref{eq:completion}).
This is the fiber of the bundle $\caH$ at the Lagrangian subspace $L_0=\{\fxi=0\}.$ On ${\mathbb V},$
\begin{itemize}
\item
$\A$ acts via $\fx \mapsto \fx$, $\fxi\mapsto i\hbar\frac{\partial}{\partial \fx};$
\item $\tg$ acts as above, via the identification $\tg=\oih \A;$
\item $\tP$ acts as in \ref{sss:The action of G on H}, namely: $\exp{\tg_{\geq 1}}$ acts via the exponential of the above action of $\tg$;
\begin{equation}\label{eq:Weil rep of P} \left [ \begin{array}{cc}
1&a\\
0&1\end{array}\right ] \mapsto \exp(i\hbar\frac{a\partial_{\fx}^2}{2});
\;
\left [ \begin{array}{cc}
b&0\\
0&^tb^{-1}\end{array}\right ] \mapsto T_b
\end{equation}
as in \eqref{eq:Weil rep 1}.
\end{itemize}
Define
\begin{equation}\label{eq:def ind}
\V=\tG\times_{\tP} {\mathbb V}
=\{\sum_{g\in \tG, v\in {\mathbb V}}gv\}/\sim
\end{equation}
where $g(pv)\sim (gp)v$ for $g\in \tG$ and $p\in \tP.$

Clearly, $\V$ is a $(\tG, \tg, \A)$-module, with the action defined as follows: $g_1(gv)=(g_1g)v; \; X(gv) = g(g^{-1}(X)v); a(gv) = g(g^{-1}(a)v)$ for $g_1\in \tG, \; X\in \tg, \; a\in \A.$
\begin{lemma}\label{lemma:H vs V}
$$\caH\isomoto \V$$
\end{lemma}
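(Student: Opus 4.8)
# Proof Proposal for Lemma \ref{lemma:H vs V}

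The plan is to exhibit an explicit $(\tG, \tg, \A)$-module isomorphism $\Phi\colon \V \isomoto \caH$ and to verify it is well-defined, bijective, and equivariant. First I would construct the map on generators of the induced module: for $g \in \tG$ and $v \in \bV$, I send $gv \mapsto T_g(\iota(v))$, where $\iota\colon \bV \hookrightarrow \caH^{\fx} \subset \caH$ is the inclusion of $\bV$ as the fiber over the Lagrangian $L_0 = \{\fxi = 0\}$ (recall $\bV = \fC[[\fx,\hbar]]((e^{c/i\hbar}))$ sits inside $\caH^{\fx}_0$ via the summand with $a = 0$), and $T_g$ denotes the action of $g$ on $\caH$ from \ref{sss:The action of G on H}. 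To see this descends to the quotient by $\sim$, I must check that for $p \in \tP$ the two vectors $g(pv)$ and $(gp)v$ have the same image, i.e. $T_g T_p(\iota(v)) = T_{gp}(\iota(v))$; this follows because $\tG$ acts on $\caH$ by a genuine action (Lemma \ref{lemma:H as a(G,g,A)-module}), provided the action of $\tP$ on $\caH$ restricted to the image of $\iota$ agrees with the action of $\tP$ on $\bV$ specified in \ref{sss:The Weil representation as an induced module}. That compatibility is a direct comparison of the formulas \eqref{eq:Weil rep of P} with the restriction of \eqref{eq:Weil rep}, \eqref{eq:Weil rep 1} to the $\caH^{\fx}$ summand — routine, since $\tP$ is precisely the stabilizer of $L_0$ and hence preserves $\caH^{\fx}$ and acts on it by the stated operators.

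Next I would check equivariance of $\Phi$ for all three structures. For the $\tG$-action this is immediate from the definition: $\Phi(g_1 \cdot gv) = \Phi((g_1 g)v) = T_{g_1 g}\iota(v) = T_{g_1} T_g \iota(v) = T_{g_1}\Phi(gv)$. For the $\tg$- and $\A$-actions one uses the formulas $X(gv) = g(g^{-1}(X)v)$ and $a(gv) = g(g^{-1}(a)v)$ in $\V$ together with the compatibility axioms from Definition \ref{dfn:(G,g,A)-modules} holding in $\caH$; so $\Phi(X(gv)) = T_g\bigl((g^{-1}X)\cdot \iota(v)\bigr)$, and using $g(Y w) = g(Y)\, g w$ in $\caH$ this equals $X \cdot T_g\iota(v) = X\cdot\Phi(gv)$, and similarly for $\A$. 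The only input here is that $\iota\colon \bV \to \caH^{\fx}$ intertwines the $\tg$- and $\A$-actions on $\bV$ with those on $\caH$, which is true by construction: on $\caH^{\fx}$ the element $\fx_k$ acts by multiplication and $\fxi_k$ by $i\hbar\,\partial/\partial\fx_k$, exactly as on $\bV$.

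Finally, bijectivity. For surjectivity, I would use Lemma \ref{lemma:weil and lagrangian}: $\caH$ is the space of finitely supported sections of a $\tG$-equivariant bundle over $\Lambda(n) \cong \tG/\tP$, and every such section is a finite sum of $\tG$-translates of sections supported at the base point $L_0$, i.e. of elements $T_g\iota(v)$; thus $\Phi$ is onto. For injectivity, the key point is that a general element of $\V$ can be reduced, using the relations $g(pv) \sim (gp)v$, to a finite sum $\sum_i g_i v_i$ with the $g_i \tP$ in distinct cosets of $\tG/\tP = \Lambda(n)$; then $\Phi(\sum g_i v_i) = \sum T_{g_i}\iota(v_i)$ is a section of the bundle on $\Lambda(n)$ supported at the distinct points $g_i L_0$, so it vanishes only if each $T_{g_i}\iota(v_i) = 0$, hence each $v_i = 0$ since $T_{g_i}$ is invertible and $\iota$ injective. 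I expect the main obstacle to be making this cosetwise normal form precise — that is, showing the relations in \eqref{eq:def ind} generate exactly the kernel of the obvious surjection onto sections of the equivariant bundle, and checking that the equivalence defining $\caH$ in \eqref{eq:alg Weil} (via partial Fourier transforms $F_{I_2}$) matches the chart-gluing of $\Lambda(n)$ compatibly — but this is precisely the content of Lemma \ref{lemma:weil and lagrangian}, which we may assume, so the argument closes.
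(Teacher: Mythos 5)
The paper states Lemma~\ref{lemma:H vs V} without proof, so there is nothing to compare against line by line; your argument is correct and is exactly the route the surrounding text sets up, namely combining Lemma~\ref{lemma:weil and lagrangian} (which identifies $\caH$ with finitely supported sections of the $\tG$-equivariant bundle on $\Lambda(n)\isomoto\tG/\tP$ whose fibre at $L_0$ is $\bV$) with the standard normal form for an induced module. The only slip is notational: $\tP$ preserves the fibre $\caH^{\fx}_0$ over $L_0$ (the image of $\iota$), not all of $\caH^{\fx}=\oplus_a\caH^{\fx}_a$, since a unipotent element $\left[\begin{smallmatrix}1&a\\0&1\end{smallmatrix}\right]$ can move a Lagrangian $\{\fxi=b\fx\}$ with $b\neq 0$ out of that chart; this does not affect your argument, which only uses invariance of $\caH^{\fx}_0$.
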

as $(\tG, \tg, \A)$-modules.
\section{Deformation quantization of a symplectic manifold}\label{s:Deformation quantization of a symplectic manifold}
\subsection{Deformation quantization algebra}\label{ss:Deformation quantization algebra} Recall the Fedosov construction from \cite{Fedosov}. On a symplectic manifold $(M, \omega),$ consider the bundle of algebras $\A_M$ associated to the bundle of symplectic frames on $M$ via the action of $\Sp(2n)$ on $\A$. In other words, if $g_{\alpha\beta}:U_\alpha\cap U_\beta\to \Sp(2n)$ is the cocycle defining the tangent bundle, then $\A_M$ is determined by this cocycle via the map $\Sp(2n)\to \Aut (\A)$. Similarly one defines the bundles algebras of Lie algebras $\g_M$ and $\tg_M$ and the bundles of groups $G_M$ and $\tG_M$. There exists a flat connection $\nabla$ in the bundle of algebras $\A_M$. Locally in coordinates, it is of the form
\begin{equation}\label{eq:fedosov connection}
\nabla=A_{-1}+d_{\DR}+A_0+A_1+\ldots
\end{equation}
where $A_k$ are $\g_k$-valued forms on $M$ and $A_{-1}$ is the standard form given in Darboux coordinates by
\begin{equation}\label{eq:A-1}
A_{-1}=-\frac{\partial}{\partial \fx}dx-\frac{\partial}{\partial \fxi}d\xi
\end{equation}
Locally for any $\nabla$, there is a gauge transformation $\sigma\in \Gamma(U, (G_M)_{\geq 1})$ such that $\sigma \nabla \sigma^{-1}$ is equal to
\begin{equation}\label{eq:nabla can}
\nabla_{\rm{st}}=(\frac{\partial}{\partial x}-\frac{\partial}{\partial \fx})dx+(\frac{\partial}{\partial \xi}-\frac{\partial}{\partial \fxi})d\xi
\end{equation}
Here $(G_M)_{\geq 1})=\exp((\g_M)_{\geq 1})).$ There exists a lifting of $\nabla$ to a $\tg_M$-valued connection $\tnabla.$ One has
\begin{equation}\label{eq:tnabla 2}
\tnabla^2=\theta=\oiho+\sum_{k=0}^{\infty}(i\hbar)^k \theta_k
\end{equation}
where $\theta _k$ are closed two-forms.

Given a Fedosov connection $\nabla$, the sheaf of horizontal sections of $\A_M$ is isomorphic to the sheaf $C^\infty_M[[\hbar]].$ The product on $\A_M$ induces a new product on $C^\infty_M[[\hbar]].$ This product is of the form
\begin{equation}\label{eq:def qua}
f*g=\sum_{k=1}^{\infty}fg+(i \hbar)^kP_k(f,g)
\end{equation}
where $P_k(f,g)$ are bidifferential expressions in $f$ and $g$, $1*f=f*1=f$, and $f*g-g*f=i\hbar \{f,g\}+O(\hbar^2)$. The Poisson bracket in the last formula is the symplectic Poisson bracket corresponding to $\omega$. The product as in \eqref{eq:def qua} is by definition a deformation quantization of $(M, \omega).$ Two deformation quantizations are isomorphic if there is a power series
$$T(f)=f+\sum_{k=1}^{\infty}(i \hbar)^k T_k(f)$$
where $T(f*_1 g)=T(f)*_2T(g)$ and $T_k$ are differential operators on $M$.

One proves that the set of isomorphism classes of deformation quantizations is in a bijection with the set of cohomology classes of
$$\theta\in \oih[\omega]+H^2 (M, \C[[\hbar]]).$$
The deformation quantization is constructed from $\theta$ as follows: start with a Fedosov connection $\nabla$ having a lifting $\tnabla$ with $\tnabla^2=\theta$; then pass to the sheaf of horizontal sections of $\nabla$.

To prove the above statement, one first shows that any deformation quantization is isomorphic to the sheaf of horizontal sections of some Fedosov connection. For that, start with a deformation and consider the bundle of algebras $\jets C_M^\infty[[\hbar]]$ of jets of $\C[[\hbar]]$-valued functions. This bundle carries the canonical flat connection \cite{GK} that, in coordinates, is given by \eqref{eq:nabla can}. One shows that $\jets C_M^\infty[[\hbar]]$ is a filtered bundle of algebras that is isomorphic to its associated graded bundle of algebras $\A_M$. The image of the canonical flat connection under this isomorphism is a Fedosov connection $\nabla.$ On the other hand, one shows that for any $\theta$ as above there is a Fedosov connection $\nabla$ with a $\tg$-valued lifting ${\widetilde{\nabla}}$ such that the cohomology class ${\widetilde{\nabla}}^2$ equal to $\theta$; such connections are gauge equivalent if and only if the corresponding cohomology classes $\theta$ are equal; cf. \cite{Fedosov}, \cite{Fe}, \cite{We}.

From now on, we will consider only the Fedosov connections whose class is equal to $\theta=\oih [\omega].$ All such connections are gauge equivalent, and they lead to isomorphic deformation quantizations.

\subsection{The twisted bundle $\caH_M$}\label{ss:The twisted bundle HM} Assume that $c_1(TM)=0$. Equivalently, the transition isomorphisms $g^{TM}_{\alpha\beta}$ of the tangent bundle of $M$ lift to a $\tSp(2n)$-valued \v{C}ech cocycle ${\widetilde{g}}^{TM}_{\alpha\beta}.$

Since $\tSp\subset \tG$ acts on $\caH$, we get an associated bundle of $\A_M$-modules with fiber $\caH.$ A lifting of the Fedosov connection $\nabla_{\mathbb A}$ to a $\tg$-valued connection ${\widetilde{\nabla}}$ defines a connection on $\caH.$ This connection is not flat; one has ${\widetilde{\nabla}}^2=\oih\omega$ (if the Fedosov characteristic class of our deformation is $\oih[\omega]$). To get a flat connection, we have to pass to {\em a twisted bundle}.
\begin{definition}\label{dfn:twisted bundle}
A twisted bundle is given by
$${\bf g}_{\alpha\beta}:U_{\alpha}\cap U_{\beta}\to \tG$$
subject to
$${\bf g}_{\alpha\beta} {\bf g}_{\beta\gamma}=\exp(\oih c\abc ){\bf g}_{\alpha\gamma}$$
where
$(c_{\alpha\beta\gamma})$ is a \v{C}ech two-cocycle with values in ${\mathbb R}$ representing the class of the symplectic form $\omega.$
\end{definition}
We construct the twisted bundle $\caH_M$ by defining
\begin{equation}\label{eq:Gab}
{\bf g}_{\alpha\beta}=\exp(-\oih a_{\alpha\beta}){\widetilde{g}}^{TM}_{\alpha\beta}
\end{equation}
where $(a_{\alpha\beta})$ is a \v{C}ech one-cochain with values in smooth functions whose differential is $(c_{\alpha\beta\gamma})$.

Note that, since $c\abc$ is a locally constant-valued cocycle, the notion of a connection makes sense for a twisted bundle defined by transition automorphisms subject to the identity as in Definition \ref{dfn:twisted bundle}. We define the flat connection $\nabla_{\caH}$ to be
\begin{equation}\label{eq:new connection on HM}
\nabla_{\caH}={\widetilde{\nabla}}+
\mu_\alpha
\end{equation}
where $d a_{\alpha\beta}=\mu_\alpha-\mu_\beta$.

In what follows, we give another definition of $\caH_M$ for polarized symplectic manifolds. This definition is very close to the constructions from \cite{GS}. Strictly speaking, the text until \ref{Twisted bundles and local trivializations}  is not needed for the sequel.
\subsubsection{Metalinear structures}\label{sss:Metalinear structures} Recall \cite{GS} that the metalinear group is by definition
\begin{equation}\label{eq:ML}
\ML (n,\R)=\{(g,\, \zeta)| g\in \GL(n, \R),\, \zeta^2=\det (g)\}
\end{equation}
This is a twofold cover of $\GL(n, \R)$. There is a morphism
\begin{equation}\label{eq:ML to C}
{\det} ^{\frac{1}{2}}:
\ML (n,\R)\to \C^{\times}; \; (g, \zeta)\mapsto \zeta.
\end{equation}
Denote by $\OL(n)$ the preimage of ${\rm{O}}(n)$ in $\ML(n).$ Let also
\begin{equation}
{\rm{U}}^{(2)}(n)=\{(u,\, \zeta)| u\in {\rm{U}}(n, \C),\, \zeta^2=\det (u)\}
\end{equation}
Let $\Sp^{(2)}(2n, \R)$ be the universal twofold cover of $\Sp(2n, \R).$ There is a commutative diagram
$$
\begin{CD}
\OL(n) @>>> \ML(n,\R)\\
@VVV  @VVV\\
{\rm{U}}^{(2)}(n) @>>> \Sp^{(2)}(2n, \R)$$
\end{CD}
$$
where the horizontal embeddings are homotopy equivalences.

A metalinear structure on a real vector bundle $E$ is a lifting of the transition automorphisms $g_{\alpha\beta}^E$ to an $\ML (n,\R)$-valued cocycle ${\widetilde g}_{\alpha\beta}^E$. For a real bundle $E$ with a metalinear structure, the complex line bundle $\wedge^{\frac{1}{2}}E$ is given by the transition automorphisms ${\det}^{\frac{1}{2}}({\widetilde g}_{\alpha\beta}^E)$.

Recall that a real polarization is an integrable distribution of Lagrangian subspaces. Let $\caP$ be a real polarization on $M$. In this case, automatically $2c_1(TM)=0$ (cf. \cite{S}). If $\caP$ admits a metalinear structure then $c_1(TM)=0.$

Let $P_0=\{g\in \Sp(2n)|g$ preserves the Lagrangian subspace $\{\xi=0\}\}.$ Let  $\tP_0$ be the preimage of $P_0$ in $\tSp$. From a real polarization $\caP$, one constructs local trivializations of $M$ and $TM$ such that the transition automorphisms of $TM$ are $P_0$-valued. If $\caP$ has a metalinear structure then they lift to a $\tP _0$-valued cocycle.
\subsubsection{Constructing the twisted bundle $\caH_M$ from the jet bundle}\label{sss:Constructing the twisted bundle HM from the jet bundle} From now on we assume that:
 \begin{itemize}
 \item
 $M$ is equipped with a real polarization $\caP$ that has a metalinear structure.
 \item
 There is a line bundle $\caL$ on $M$ such that $c_1(\caL)={2\pi i}[\omega]$ and the transition automorphisms of $\caL$ are $\caP$-horizontal.
\end{itemize}
 The twisted bundle $\caH_M$ is constructed as follows. For any line bundle ${\mathcal E}$ on $M$, define $\jets \Gamma (\E)$ as the bundle of jets of sections of $\E$. If the transition automorphisms of $\caE$ are horizontal with respect to $\caP$, then denote by $\jets \Gamma_{\hor} (\E)$ the bundle of jets of ${\caP}$-horizontal sections of $\caE$.
 Now, {\em formally}, write the transition automorhisms of the bundle
 \begin{equation}\label{eq:jets Gamma hor}
 \jets (\Gamma _{\hor}((\caL^{\otimes -\oih})\otimes \wedge ^{\frac{1}{2}}T_{\caP}^*))[[\hbar]]
 \end{equation}
 and apply to them the gauge transformation
 \begin{equation}\label{eq:sigma}
 \sigma _\alpha=e^{\oih \xi^\alpha\fx}
 \end{equation}
 on any Darboux chart $U_\alpha.$ Let us explain the meaning of the above formulas. First of all, $(\xi_\alpha,\,x_\alpha)$ are local Darboux coordinates on $U_\alpha$ such that leaves of $\caP$ subsets $\{x^\alpha_k=c_k, k=1,\ldots, n\};$ a choice of such a coordinate system identifies $\jets\Gamma_{\hor}(\caL^{\otimes n})\otimes \wedge^{\frac{1}{2}}T^*_{\caP}$ locally with $\C[[\fx]]$ for any integer $n$. Denote by $g_{\alpha\beta}^{\caL, n}$ the transition automorphisms with respect to these local trivializations. Put
 \begin{equation}\label{eq:transition functions of HM}
 {\bf g}_{\alpha\beta}=\sigma_\alpha g_{\alpha\beta}^{\caL,-\oih}\sigma^{-1}_\beta
 \end{equation}
 (we explain below what does it mean to substitute a formal parameter $-\oih$ for an integer $n$).
 \begin{prop}\label{prop:gerbelike} The above construction defines a cochain
 $${\bf g}_{\alpha\beta}:U_\alpha\cap U_\beta\to \tG$$
 satisfying
 $${\bf g}_{\alpha\beta}{\bf g}_{\beta\gamma}=
 \exp(\oih{c_{\alpha\beta\gamma}}){\bf g}_{\alpha\gamma}$$
 where $c_{\alpha\beta\gamma}$ is a cochain with values in $\Z$ that represents the class of the form $\omega.$
 \end{prop}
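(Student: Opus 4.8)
The plan is to compute in the adapted Darboux charts and to separate the transition cochain \eqref{eq:transition functions of HM} into three commuting pieces --- a central scalar, a linear (metalinear) piece, and a higher-order piece --- keeping careful track of the conjugation by the gauge transformations $\sigma_\alpha = e^{\oih\xi^\alpha\fx}$ of \eqref{eq:sigma}. First I would pin down the meaning of $\caL^{\otimes-\oih}$. On each nonempty overlap choose a branch of logarithm of the transition function of $\caL$ in the $\caP$-horizontal local frame determined by the adapted coordinates, writing it $\exp(\ell_{\alpha\beta})$ with $\ell_{\alpha\beta}$ a function of $x^\alpha$ only; the hypotheses say precisely that $\ell_{\alpha\beta}$ is built from local symplectic potentials for $\omega$, and that their \v{C}ech coboundary $\delta\ell_{\alpha\beta\gamma}=\ell_{\alpha\beta}+\ell_{\beta\gamma}-\ell_{\alpha\gamma}$ is a locally constant cocycle representing $c_1(\caL)=2\pi i[\omega]$. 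For an integer $n$ the chosen trivialization identifies the transition map of $\jets(\Gamma_\hor(\caL^{\otimes n})\otimes\wedge^{\frac12}T^*_\caP)[[\hbar]]$ with the product of the $\det^{\frac12}$-factor coming from the metalinear structure on $\caP$, the Taylor jet along $\fx$ of multiplication by $\exp(n\,\ell_{\alpha\beta})$, and the jet of the coordinate change; ``substituting $n=-\oih$'' then means, literally, substituting the formal parameter $-\oih$ into $\exp(n\,\ell_{\alpha\beta})$, so that, using the grading \eqref{eq:grading}, the scalar factor splits into an $\fx$-independent ``phase'' of degree $-2$, a degree-$(-1)$ piece of exactly the shape \eqref{eq:sigma}, a degree-$0$ piece, and a piece in $\exp(\tg_{\geq1})$.

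With this in hand, two things remain to verify: (i) that ${\bf g}_{\alpha\beta}=\sigma_\alpha\, g^{\caL,-\oih}_{\alpha\beta}\,\sigma_\beta^{-1}$ takes values in $\tG$, and (ii) the gerbe identity. For (i), the degree-$0$ part of $g^{\caL,-\oih}_{\alpha\beta}$ --- the Hessian term of the scalar together with the $P_0$-valued linear coordinate change --- lifts, via the metalinear structure, to a $\tP_0\subset\tSp(2n,\R)$-valued cochain which is then the linear part of ${\bf g}_{\alpha\beta}$; it is symplectic-real, the quadratic part is real, and the constant part arbitrary, exactly because $\caP$, the coordinates and the chosen potentials are real and the metalinear structure supplies the $\tSp$-lift. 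The one point that is not automatic is that the potentially dangerous degree-$(-1)$ pieces cancel: $\sigma_\alpha\sigma_\beta^{-1}=e^{\oih(\xi^\alpha-\xi^\beta)\fx}$, and on an overlap $\xi^\alpha-\xi^\beta$ is, up to the linear coordinate change, the differential of the same potential difference that controls $\ell_{\alpha\beta}$, so the degree-$(-1)$ factor of $g^{\caL,-\oih}_{\alpha\beta}$ gets absorbed and what remains lies in $\tG$. This is exactly the gauge transformation relating the polarized normalization of the Fedosov connection used in \cite{GS} to the Weyl normalization $\nabla_{\mathrm{st}}$ of \eqref{eq:nabla can}, and I would check the statement on generators, in the manner of the proof of Lemma \ref{lemma:weil and lagrangian}.

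For (ii), note that for integer $n$ all three factors are honest cocycles, since $\caL^{\otimes n}\otimes\wedge^{\frac12}T^*_\caP$ is a genuine line bundle, the metalinear structure makes the $\det^{\frac12}$-factor a cocycle by hypothesis, and composition of coordinate changes is associative. After the substitution $n=-\oih$ the half-density and coordinate-change factors are unchanged, hence still cocycles, and the only new defect is the failure of $\exp(-\oih\,\ell_{\alpha\beta})$ to be a cocycle, concentrated in the degree-$(-2)$ ``phase'' and equal to $\exp(-\oih\,\delta\ell_{\alpha\beta\gamma})$; writing this as $\exp(\oih c_{\alpha\beta\gamma})$ --- with $c_{\alpha\beta\gamma}$ a $\Z$-valued \v{C}ech $2$-cocycle representing $[\omega]$, the signs and the factors of $2\pi$ being absorbed into the precise meaning of the formal power $\caL^{\otimes-\oih}$ --- it is a constant scalar, hence lies in the central subgroup $\exp(\oih\C\oplus\C)\subset\tG$ and commutes with the $\sigma_\alpha$. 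Telescoping $\sigma_\beta^{-1}\sigma_\beta=1$ in ${\bf g}_{\alpha\beta}{\bf g}_{\beta\gamma}=\sigma_\alpha\, g^{\caL,-\oih}_{\alpha\beta}g^{\caL,-\oih}_{\beta\gamma}\,\sigma_\gamma^{-1}$ then gives exactly $\exp(\oih c_{\alpha\beta\gamma})\,{\bf g}_{\alpha\gamma}$.

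The main obstacle is the degree-$(-1)$ cancellation in (i): making rigorous that conjugation by the gauge transformations $\sigma_\alpha$, which themselves lie in $\exp(\tg_{-1})$ and \emph{not} in $\tG$, returns a $\tG$-valued cochain --- equivalently, that the polarized model \eqref{eq:jets Gamma hor} of $\caH_M$ and its Moyal--Weyl model differ precisely by the $\sigma_\alpha$. This is the geometric-quantization-meets-Fedosov computation underlying \cite{GS} and \cite{Fedosov}, and the bulk of a careful proof would be spent tracking the interaction of $\xi^\alpha-\xi^\beta$ with the affine part of the adapted coordinate change; once it is in place, (ii) is immediate as above, and fixing the precise normalization of $c_{\alpha\beta\gamma}$ is routine bookkeeping.
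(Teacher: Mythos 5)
Your proposal is correct and follows essentially the same route as the paper: express the transition function of $\caL$ locally as $\exp(f_{\alpha\beta})$, interpret the formal power $-\oih$ via this logarithm, observe that after conjugation by $\sigma_\alpha$ the resulting cochain lies in $\exp(\oih f_{\alpha\beta})\,{\widetilde g}^{TM}_{\alpha\beta}\exp(\tg_{\geq 1})\subset\tG$, and identify the cocycle defect with the central constant $\exp(\oih\,\delta f_{\alpha\beta\gamma})$ representing $[\omega]$. The paper's own proof asserts these two points without detail, whereas you make explicit the degree-by-degree decomposition and the cancellation of the degree~$-1$ term against $\sigma_\alpha\sigma_\beta^{-1}$, which is exactly the point the paper leaves as ``easy to see.''
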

 \begin{proof} Let us explain the construction of ${\bf g}_{\alpha\beta}$ in more detail. Let $\phi _\alpha: \caL|U_\alpha\isomoto \R^{2n}\times \C$ be a local trivialization of $M$ and $\caL$. Put
 \begin{equation}\label{eq:transition and atlas 1}\phi _\alpha\phi _\beta^{-1}:\phi _\beta(U_\alpha\cap U_\beta)\times \C\isomoto \phi _\alpha(U_\alpha\cap U_\beta)\times \C;
 \end{equation}
 \begin{equation}\label{eq:transition and atlas 2}
 (x_\beta, v_\beta)\mapsto (g_{\alpha\beta}(x_\beta), h^{\caL}_{\alpha\beta}(x_\beta)v_\beta).
 \end{equation}
 Here $h^{\caL}_{\alpha\beta}(x_\beta)=g^{\caL}_{\alpha\beta}(\phi _\beta ^{-1}(x_\beta))$. Using similar notation for $T{\caP}$, define
 \begin{equation}\label{eq:transition and atlas 3}
 {\bf g}_{\alpha\beta}^{-1}(s_\alpha)(\fx)= \sigma_\alpha s_\alpha (g_{\alpha\beta}(\fx+x_\beta)-x_\alpha) h_{\alpha\beta}^{\mathcal L}(\fx+x_\beta)^{\frac{1}{i\hbar}}{\det}^{\frac{1}{2}}
 ({\widetilde{h}}^{T\caP}_{\alpha\beta})\sigma^{-1}\beta
 \end{equation}
 By definition, the power $\oih$ means the following: locally, express $h_{\alpha\beta}^{\mathcal L}$ as $\exp(f_{\alpha\beta});$ then
 $$(h_{\alpha\beta}^{\mathcal L})^{\oih}=\exp(\oih f_{\alpha\beta}).$$
 It is easy to see that \eqref{eq:transition and atlas 3} defines a map ${\bf g}_{\alpha\beta}:U_\alpha\cap U_\beta\to \tG$ which is of the form
 \begin{equation}\label{eq:transition and atlas 4}
 {\bf g}_{\alpha\beta}\in \exp(\oih f_{\alpha\beta}){\widetilde g}^{TM} _{\alpha\beta}\exp(\tg_{\geq 1});
 \end{equation}
 here ${\widetilde g}^{TM} _{\alpha\beta}$ are $\tP _0$-valued transition functions of $TM$, after the embedding $\tP_0 \to \tSp(2n).$ It is also easy to see that the \v{C}ech coboundary of $(f_{\alpha\beta})$ represents the class of $\omega$.
 \end{proof}
  The canonical connection on the jet bundle is of the form
 \begin{equation}\label{eq:connection on H}
 \nabla_{\can} = (\frac{\partial}{\partial x^\alpha}-\frac{\partial}{\partial \fx})dx^{\alpha}+\frac{\partial}{\partial \xi^\alpha}d\xi^\alpha;
 \end{equation}
 after the gauge transformation $\sigma_\alpha,$ it becomes
 \begin{equation}\label{eq:connection on H 1}
 \nabla_{\caH} = -\oih \xi^\alpha dx^\alpha +(\frac{\partial}{\partial x^\alpha}-\frac{\partial}{\partial \fx})dx^{\alpha}+(\frac{\partial}{\partial \xi^\alpha}+\oih \fx)d\xi^\alpha;
 \end{equation}
 this is a flat connection on $\caH_M.$
 \subsubsection{Constructing the deformation quantization $\A_M$ from the jet bundle}\label{sss:Constructing the deformation quantization AM from the jet bundle} Now define a bundle of algebras by means of a \v{C}ech cocycle which is the image of $G\ab$ under the projection $\tG\to G.$ This is a filtered bundle of algebras whose associated graded is isomorphic to $\A_M$ (cf., for example, \cite{Nest-Tsygan}). Abusing the notation, we will denote by $\A_M$ the above bundle of algebras itself.

 \begin{remark}\label{rmk:rees}
  Recall that the Rees ring of a ring $A$ with an increasing filtration $ F_\bullet A$ is defined by
$${\mathcal R}A=\sum \hbar ^p F_pA.$$
 Consider the bundle of algebras ${\mathcal R}\jets {\mathcal D}_{\hor}(\caL ^{\otimes-\oih}\otimes \wedge ^{\frac{1}{2}}T^*\caP)$, after the gauge transformation $\Ad \sigma_\alpha$ on any Darboux chart. This is a bundle with the fiber $C^{\infty, {\rm {pol}}}_M[\hbar]$ where the latter can be defined as follows. Let $F_0 C^\infty _M=C^\infty_{\hor}$ be the space of functions that are horizontal with respect to $\caP.$ Define inductively $F^n C^\infty _M=\{f|\{f, F_0\}\subset F_{n-1}\}.$ Put
$$C^{\infty, {\rm {pol}}}_M=\cup_n F_n C^\infty _M.$$ In local Darboux coordinates, $F_n$ consists of functions that are polynomial of degree $\leq n$ in $\xi^\alpha.$ The transition functions and the product in the above bundle of algebras extend from $C^{\infty, {\rm {pol}}}_M[\hbar]$ to $C^{\infty}_M[[\hbar]]$ to give the bundle $\A_M$ as above.
\end{remark}
One defines a connection $\nabla _{\mathbb A}$ on $\A_M$ exactly as we did for $\caH_M.$ This is a Fedosov connection. To construct its $\tg$-valued lifting, consider a bundle which is a modification of $\caH_M.$ Formally, consider the transition automorphisms of the bundle
\begin{equation}\label{eq:jets Gamma hor 1}
 \jets (\Gamma _{\hor}((\caL^{\otimes -\oih})\otimes \wedge ^{\frac{1}{2}}T_{\caP}^*))[[\hbar]]\otimes {\caL}^{\otimes\oih}
 \end{equation}
 (Rigorously speaking, these are just $G\ab exp(-\oih f\ab)$as in and after \eqref{eq:transition and atlas 3}). They have the same image in $G$ as the transition automorphisms $G\ab.$ Formally writing the connection $\nabla_{\can}\otimes 1+1\otimes \nabla^{\mathcal L}$, we get a $\tg$-valued connection $\tnabla_{\mathbb A}$ lifting $\nabla _{\mathbb A}$ and such that $\tnabla_{{\mathbb A}}^2=\oiho.$

 The above construction is a generalization of the one carried out in \cite{NT} for the case of the cotangent bundle. It is also very close to the one from \cite{BD}.

\subsection{Twisted bundles and local trivializations}\label{Twisted bundles and local trivializations}
 Another way to interpret $\caH_M$ is as follows. Let, as above, $c=(c\abc)$ be a \v{C}ech cocycle representing the class of $\omega.$
 \begin{itemize}
 \item
 For any local trivialization $T$ of $c$, a bundle $\caH_T$ of $\A_M$-modules.
 \item
 For any two local trivializations $T$ and $T'$, an isomorphism of bundles $\phi_{T,T'}:\caH _T\lisomoto \caH_{T'}.$
 \item
 For any three local trivializations $T$, $T'$, and $T''$,
 $$\phi_{T,T'}\circ \phi_{T',T''}={\bf c}_{T,T',T''}\phi_{T,T''}$$
 where ${\bf c}_{T,T',T''}\in \exp(\oih \R).$
 \item For any four trivializations,
 $${\bf c}_{T,T',T''}{\bf c}_{T,T'',T'''}={\bf c}_{T',T'',T'''}{\bf c}_{T,T',T'''}$$
\end{itemize}
Every bundle $\caH$ is a bundle of $\A_M$-modules. It is equipped with a flat connection $\nabla_{{\caH},T}$, and $\phi_{T,T'}^* \nabla_{{\caH},T}=\nabla_{{\caH},T'}.$ Also, $\phi_{T,T'}$ are isomorphisms of $\A_M$-modules.

The bundles $\caH_T$ are defined by \v{C}ech cocycles $G'\ab={G\ab}{\exp(-\oih a\ab)}$ where $a\ab$ is the cochain trivializing $c\abc$ according to the local trivialization $T$.
 \subsection{The groupoid $\tG_M$}\label{ss:The groupoid GM} We keep the assumptions from \ref{sss:Constructing the twisted bundle HM from the jet bundle}. Recall that $c$ is a chosen real-valued two-cocycle representing the cohomology class of $\omega$.

 For $x,\,y\in M$, define $\tG_{x,y}$ as the set of equivalence classes of the following data:
 \begin{enumerate}
 \item
 A smooth path $\gamma:[0,1]\to M;$ we put $\gamma(t)=x_t$ and require $x_0=x,$ $\,x_1=y.$
 \item
 A trivialization $T$ of $\gamma^*c.$
 \item
 $g_t: \caH_{T,x_0}\lisomoto \caH_{T, x_t}$ smoothly depending on $t\in [0,1]$. We require $\frac{dg_t}{dt}g_t^{-1}$ to be in the image of $\oih \C\oplus \tg_{\geq 0}$ in the space of linear operators on $\caH_{T, x}.$
 \end{enumerate}
 Two such data $(\gamma_0, g_{t,0}, T)$ and $(\gamma_1, g_{t,1}, T')$ are equivalent if there exist:
 \begin{enumerate}
 \item
 A smooth $\gamma: [0,1]^2\to M;\;$ we denote $\gamma(t,s)$ by $x_{t,s}$ and require $x_{t,0}=x$, $x_{t,1}=y.$
 \item
 A trivialization $S$ of $\gamma^*c.$
 \item
 Isomorphisms $h_{t,s}:\caH_{x_0}\lisomoto \caH_{t,s}$ smoothly depending on $(t,s)\in [0,1]^2$ and satisfying
 $$g_{t,0}=\phi ^{-1}_{S,T}(x_0)h_{t,0}\phi _{S,T}(x_{t,0});\;\;g_{t,1}=\phi ^{-1}_{S,T'}(x_0)h_{t,1}\phi _{S,T'}(x_{t,1}).$$
 \end{enumerate}
 Let $(\gamma, \, T, \,g_t)$ define an equivalence class in $\tG_{x,y}$ and $(\delta, \, S, h_t)$ a class in $\tG_{y,z}$. The composition of these classes is the class in $\tG_{x,z}$ of the data $(\gamma\circ \delta,\; R,\; k_t)$ obtained as follows: the path is the concatenation of $\gamma$ and $\delta$; the trivialization $R$ is defined in the obvious way from $T$ and $S$; the isomorphisms $k_t$ are defined from $g_t$ and $h_t$ by concatenation, using the trivialization $R$. We leave the details to the reader.

 It is easy to see that the composition is well defined. It turns the collection of $\tG_{x,y}$ into a groupoid on $M$. For any $x$ and $y$ in $M$, $\tG_{x,y}$ can be identified with $\pi _1(x,y)\times \tG$ where $\pi _1$ is the fundamental groupoid of $M$.
  \subsubsection{Connections on groupoids}\label{sss:Connections on groupoids} Assume that $\tG$ is any Lie groupoid on a manifold $M$. Denote by $\pi_1$ and $\pi_2$ the two projections $M\times M\to M,$ and by $\pi_{ij}$, $1\leq i<j\leq 3,$ the three projections $M\times M\times M\to M.$

  Let $V$ be a vector bundle on $M$. Recall that an action of the groupoid $\tG$ on $V$ is a morphism of presheaves on $M$
  $$\tG\times \pi_2^*V\to \pi^*_1 V; (\sigma, \, v)\mapsto \sigma v$$
   such that $\sigma(\tau v)=(\sigma\tau)v$ as morphisms $\pi_{12}^*\tG\times \pi_{23}^*\tG\times \pi_3^*V\to \pi^*_1 V.$

   Let $\tg$ be a bundle of Lie algebras on $M$. We assume that
  \begin{enumerate}
  \item
  $\tG$ acts on $\tg$ by Lie algebra isomorphisms;
  \item
  $\Lie(\tG_{x,x})\subset \tg_x;$
   \item
   When restricted to $\Lie(\tG_{x,x}),$ the action of $\tG$ is the usual adjoint action.
   \end{enumerate}
   Somewhat abusing the notation, we denote the action of $\tG$ on $\tg$ by $\Ad$. Note that $\tg$ can be larger than the bundle of Lie algebras $\Lie(\tG_{x,x}),$ compare to Remark \ref{rmk:Lie algebra vs Lie group}.

   A connection on $\tG$ is a morphism
of presheaves on $M\times M$
  \begin{equation}\label{eq:connection on a groupoid, 1}
  \sigma\mapsto \sigma^{-1}\nabla(\sigma)\;\;\;\tG\to\Omega^1_{M\times M}(\pi ^*_2\tg)
  \end{equation}
  satisfying
  \begin{equation}\label{eq:connection on a groupoid, 2}
  \pi_{13}^*((\sigma\tau)^{-1}\nabla(\sigma\tau))=\Ad(\tau^{-1})
  (\pi_{12}^*(\sigma^{-1}\nabla(\sigma)))+\pi^*_{23}(\tau^{-1}\nabla(\tau))
  \end{equation}
  in $\Omega^1_{M\times M\times M}(\pi_3^*(\tg)).$

   Assume now that $V$ is a vector bundle with a connection $\nabla_V$. Let $\tG$ act on $V$ together with the bundle of Lie algebras $\tg,$ and that the two actions are compatible. We say that the action is compatible with connections if
   \begin{equation}\label{eq:connection on a groupoid, 3}
   (\pi_1^*\nabla_V)(\sigma v)=\Ad(\sigma)(\sigma^{-1}\nabla(\sigma))v+\sigma\nabla_Vv
   \end{equation}
   In particular, we will often assume that $\tg$ has a flat connection $\nabla_{\tg}$ compatible with the adjoint action of $\tG$. In this case, we say that the connection on $\tG$ is {\em flat} if
   \begin{equation}\label{eq:flat connection on a groupoid}
   (\pi _2^*\nabla_{\tg})(\sigma^{-1}\nabla (\sigma))+\frac{1}{2}[\sigma^{-1}\nabla (\sigma), \sigma^{-1}\nabla (\sigma)]=0
   \end{equation}
   in $\Omega^2_{M\times M}(\pi_2^*\tg).$
   \begin{example}\label{ex:connection on the groupoid Isom}
   Let $V$ be a vector bundle with a connection $\nabla_V.$ Put $\tG_{x,y}=\Isom(V_y, V_x)$ be the set of isomorphisms $V_x\lisomoto V_y.$ Define a connection on $\tG$ that is compatible with the action of $\tG$ on $V$ as follows. In local coordinates, if $\nabla=d_{\DR}+A$ and $A_i=\pi^*_i A$ on $M\times M$, then
   \begin{equation}\label{eq:connection on the groupoid Isom}
   \sigma^{-1}\nabla(\sigma)=\sigma^{-1}d_{\DR}(\sigma)+\Ad (\sigma^{-1})A_1-A_2
   \end{equation}
   \end{example}
   \subsubsection{The connection on $\tG_M$}\label{sss:The connection on GM} We define a flat connection on $\tG_M$ to be the one induced from the above connection on $\Isom(\caH_M)$ under the morphism $\tG_M\to \Isom(\caH_M).$
\section{Oscillatory modules}\label{ss:Oscillatory modules} As above,we keep the assumptions from \ref{sss:Constructing the twisted bundle HM from the jet bundle}. We have defined:
\begin{itemize}
\item
The bundle of algebras $\A_M$ with a flat connection $\nabla_{\mathbb A}$;
\item
The bundle of Lie algebras $\tg_M$ with a flat connection $\nabla_{\tg};$
\item
The twisted bundle of modules $\caH_M$ with a flat connection $\nabla_{\caH};$
\item
The groupoid $\tG_M$ with a flat connection $\nabla,$ or $\nabla_G$.
\end{itemize}
Furthermore, $\tG_M$ acts on $\A_M$, $\tg_M$, and $\caH_M$; $\tg_M$ acts on $\A_M$ and on $\caH_M$; $\A_M$ acts on $\caH_M$; all these actions are compatible with each other (as in \ref{sss:(G,g,A)-modules}) and with the connections.
\begin{definition}\label{dfn:oscmod}
An oscillatory module is a bundle ${\mathcal V}$ with a flat connection $\nabla_{\mathcal V}$ together with the actions of $\tG_M$, $\tg_M,$ and $\A_M$. These actions are required to be compatible with one another and with the connections.
\end{definition}
The twisted bundle $\caH_M$ satisfies all the requirements but one: it is not an actual vector bundle. We will turn it into one by a cut-and-paste procedure described in the subsection below.
\subsection{Oscillatory modules and Lagrangian submanifolds}\label{ss:Oscillatory modules and Lagrangian submanifolds} Assume that $L$ is a Lagrangian submanifold of $M$. Define a vector bundle ${\mathbb V}_L$ with a flat connection $\nabla_{\mathbb V}$ as follows. Consider the restriction of $\caH_M$ to $L$. As above, choose a \v{C}ech cocycle $c$ representing the class of $\omega$. Choose a trivialization $T$ of $c$ on $L$ (this can be done because $L$ is Lagrangian). Now define $\bV_L$ as the subbundle 0f $\caH_T$ consisting of vectors whose image in the Lagrangian Grassmannian (cf. Lemma \ref{lemma:weil and lagrangian}) at every point $x$ is the tangent space $T_xL.$ It is easy to see that this subbundle is preserved by the connection $\nabla_{\caH}.$ (The induced connection is $\tg$-valued).

Let $\tP_L$ be the groupoid on $L$ constructed as follows: $({\tP_L})_{x,y}$ is the set of homotopy classes of smooth paths $\gamma: [0, 1]\to L;\;\; \gamma(t)=x_t$ together with $g_t: \bV_x\lisomoto \bV_{x_t}$ that depend smoothly in $t\in [0,1].$ We assume that $\frac{dg_t}{dt}g_t^{-1}$ is in the image of $\oih \C+\tp_{\geq 0}$ in the space of linear operators on $\bV_x$. Here $\tp$ is the Lie subalgebra of $\tg_{\geq 0}$ consisting of those $X$ whose component in $\tg_0$ lies in $\p_0\oplus \C$, $\p_0$ being the Lie algebra of $P_0.$

Now define
\begin{equation}\label{eq:VL 1}
\V_L=\tG_M\times _{\tP_L}\bV_L
\end{equation}
By definition, this means that
\begin{equation}\label{eq:VL 2}
\V_{L, x}=\{\sum _{y\in L}\tG_{x,y}\cdot\bV_{L,y}\}/\sim
\end{equation}
where
\begin{equation}\label{eq:VL 3}
gp\cdot v\sim g\cdot pv, \;\; g\in \tG_{x,y}, \; \; p\in \tP_{y,z},\;\; v\in \bV_{L,z}, \;\; y,\\; z\in L.
\end{equation}
It is clear that $\V_L$ is a $(\tG_M, \tg_M, \A_M)$-module. Indeed, the three actions are given by
\begin{equation}\label{eq:VL 4}
g(g_1\cdot v)=(gg_1)\cdot v;\;\; X(g\cdot v)=g \cdot \Ad(g^{-1})(X)v;\;\; a(g\cdot v)=g \cdot g^{-1}(a)v
\end{equation}
Now construct a flat connection on $\V$. Formally, put
\begin{equation}\label{eq:connection on V, 1}
\nabla_{\V}(g\cdot v)=g\cdot (g^{-1}\nabla_{\tG}(g))v+g\cdot \nabla_{\bV}v
\end{equation}
The meaning of this formula is as follows. First of all, $g\cdot v$ stands for the following. There is a smooth map $\varphi: U\to L$ where $U$ is an open subset of $M$, plus a section $g\in \Gamma ({\rm{graph}}(\varphi), \tG_M).$ There is also a local section $v$ of $\bV_L.$ The local section $g\cdot v$ of $\V_L$ on $U$ is defined by $x\mapsto g(x,\varphi(x))\cdot v(\varphi (x)).$ For a tangent vector $X\in T_xM,$ define
\begin{equation}\label{eq:connection on V, 2}
(\nabla_{\V})_X (g\cdot v)=g\cdot (\iota_{i_{1*}X}(g^{-1}\nabla_{\tG}(g)))v+g\cdot (\nabla_{\bV})_{\varphi _*X}(v)
\end{equation}
Here $i_1: U\to {\rm{graph}}(\varphi)$ is the embedding, and $\nabla _X v=\iota_X(\nabla v)$ is the covariant derivative along a tangent vector $X$.
 \section{The case of the two-torus}\label{s:The case of the two-torus}
 \subsection{Deformation quantization of the two-torus}\label{Deformation quantization of the two-torus} Let $M=\T^2=\R^2/\Z^2$ be the standard two-torus with coordinates $x$ and $\xi$ and with the symplectic form $\omega=d\xi dx.$
  \subsubsection{The bundles $\A_M$ and $\tg_M$}\label{sss:The bundles AM and gM on torus}
Sections of $\A_M$ are $\A$-valued functions $f(x,\xi;\fx, \fxi, \hbar)$ satisfying
\begin{equation}\label{eq:two-periodicity}
f(x+1,\xi;\fx, \fxi, \hbar)=f(x,\xi;\fx, \fxi, \hbar);\;\;f(x,\xi+1;\fx, \fxi, \hbar)=f(x,\xi;\fx, \fxi, \hbar)
\end{equation}
The Fedosov connection is of the form
\begin{equation}\label{eq:nabla a for torus}
\nabla_{\mathbb A}=(\frac{\partial}{\partial x}-\frac{\partial}{\partial \fx})dx+(\frac{\partial}{\partial \xi}-\frac{\partial}{\partial \fxi})d\xi
\end{equation}
The bundle of Lie algebras $\tg_M$ is just $\oih \A_M$ with the same connection and with the commutator given by $[a,b]=ab-ba.$
\subsubsection{The twisted bundle $\caH_M$}\label{sss:The twisted bundle HM on torus}
Consider the trivialization $T$ of the form $\omega$ on a cover $\R^2/\Z$ of $M$ with coordinates $x\in \R,\; \xi\in \R/\Z,$ given by
\begin{equation}\label{eq:triv T}
\omega = d(-xd\xi)
\end{equation}
\begin{lemma}\label{lemma:H on torus}
One can choose the data $\caL, \nabla_{\caL}$ in such a way that sections of the bundle $\caH_T$ become identified with $\caH$-valued functions on $\R^2$ satisfying
$$s(x+1, \xi)=s(x,\xi);\;\; s(x, \xi+1)=e^{\xh}s(x,\xi)$$
and the connection $\nabla_{\caH}$ becomes
$$\nabla_{\caH}=-\oih \xi dx+(\ddx-\ddfx)dx+(\ddxi+\fxh)d\xi$$
\end{lemma}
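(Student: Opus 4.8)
The plan is to unwind the abstract definition of $\caH_T$ from Section \ref{Twisted bundles and local trivializations} in the very concrete setting $M=\T^2$, using the polarization $\caP$ given by the leaves $\{x=\mathrm{const}\}$ (so $T^*_{\caP}$ is trivial and $\wedge^{\frac12}T^*_{\caP}$ carries a trivial metalinear structure), and exhibiting an explicit choice of the line bundle $\caL$ with $c_1(\caL)=2\pi i[\omega]$ whose transition cocycle realizes the \v{C}ech representative dual to \eqref{eq:triv T}. Concretely, on the cover $\R^2/\Z$ with coordinates $(x\in\R,\xi\in\R/\Z)$ the one-form $-x\,d\xi$ is a global primitive of $\omega$, so the only nontrivial identification comes from the deck transformation $x\mapsto x+1$; I would take $\caL$ to be the bundle whose pullback to $\R^2$ is trivial with the section-identification $s(x+1,\xi)=e^{-2\pi i\xi}\,s(x,\xi)$ coming from the automorphy factor of $-x\,d\xi$, and then raise this to the formal power $-\tfrac{1}{i\hbar}$ exactly as in \eqref{eq:transition and atlas 3}. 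Pushing this through \eqref{eq:transition functions of HM}–\eqref{eq:transition and atlas 3} together with the gauge transformation $\sigma=e^{\oih\xi\fx}$ of \eqref{eq:sigma} should produce precisely the automorphy factors $s(x+1,\xi)=s(x,\xi)$ and $s(x,\xi+1)=e^{\xh}s(x,\xi)$ claimed in the statement.

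The key steps, in order, are: (1) write down the Darboux atlas of $\T^2$ compatible with $\caP$ and the two deck transformations $x\mapsto x+1$, $\xi\mapsto\xi+1$, and identify $\jets\Gamma_{\hor}(\caL^{\otimes n}\otimes\wedge^{\frac12}T^*_{\caP})$ locally with $\C[[\fx]]$ as in \ref{sss:Constructing the twisted bundle HM from the jet bundle}; (2) compute the transition cocycle $g^{\caL,n}_{\alpha\beta}$ for the explicit $\caL$ above and verify that its \v{C}ech coboundary represents $[\omega]$ — this is where the ``$\mathrm{mod}\;\Z$'' subtlety in $\xi$ forces the factor $e^{-2\pi i\xi}$ and hence, after substituting $n=-\oih$, the factor $e^{\xh}$; (3) conjugate by $\sigma_\alpha=e^{\oih\xi\fx}$ and check that the $x$-automorphy factor collapses to $1$ while the $\xi$-automorphy factor becomes $e^{\xh}$, and that no $\wedge^{\frac12}T^*_{\caP}$ contribution survives since that bundle is trivial over $\T^2$ with the standard metalinear structure; (4) transport the canonical connection \eqref{eq:connection on H} through the same gauge transformation, which by \eqref{eq:connection on H 1} gives $\nabla_{\caH}=-\oih\xi\,dx+(\ddx-\ddfx)dx+(\ddxi+\fxh)d\xi$, and note it is flat by Proposition \ref{prop:gerbelike} (or directly, since its curvature is $\oih\omega$ minus the exact form $d(-\oih x\,d\xi)=\oih\omega$... one should double-check the bookkeeping of signs here against \eqref{eq:triv T}); (5) finally observe that $T$ as in \eqref{eq:triv T} is exactly the trivialization $d(-x\,d\xi)$ used in step (2), so the bundle just constructed \emph{is} $\caH_T$.

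The main obstacle I anticipate is not any single computation but the careful matching of conventions: the formal power $(\caL)^{\otimes -\oih}$ is defined only locally via $\exp(f_{\alpha\beta})\mapsto\exp(\oih f_{\alpha\beta})$ (cf.\ after \eqref{eq:transition and atlas 3}), so one must check that the global automorphy factor $e^{-2\pi i\xi}$ for $\caL$ — which is genuinely multi-valued as a logarithm on $\R/\Z$ — still yields a \emph{well-defined} $\tG$-valued cochain after raising to the power $-\oih$, i.e.\ that the ambiguity lands exactly in the allowed central subgroup $\Z\times\exp(\oih\C[[\hbar]])$ of \eqref{eq:extension G}. This is precisely the content of Proposition \ref{prop:gerbelike} ($c_{\alpha\beta\gamma}$ is $\Z$-valued and represents $[\omega]$), so the job is to verify that the general mechanism specializes correctly here and produces the stated $e^{\xh}$ rather than, say, $e^{\xih}$ or its inverse. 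Once the signs and the placement of $x$ versus $\xi$ in the automorphy factor are pinned down against \eqref{eq:triv T} and \eqref{eq:sigma}, the rest — the explicit form of $\nabla_{\caH}$ and its flatness — follows mechanically from \eqref{eq:connection on H 1}.
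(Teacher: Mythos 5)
Your outline has the right scaffolding (identify $\jets\Gamma_{\hor}$ locally with $\C[[\fx]]$, raise $\caL$ to the formal power $-\oih$, gauge by $\sigma_\alpha$, transport the canonical connection), and steps (1), (3)--(5) coincide with what the paper does. But the concrete choice of $\caL$ in step (2) is wrong, and it fails at exactly the point you flag at the end without resolving. You take $\caL$ with automorphy $s(x+1,\xi)=e^{-2\pi i\xi}s(x,\xi)$, $s(x,\xi+1)=s(x,\xi)$. First, the factor $e^{-2\pi i\xi}$ is not $\caP$-horizontal for the polarization with leaves $\{x=\mathrm{const}\}$: horizontal sections are locally functions of $x$ alone, and multiplication by a function of $\xi$ does not preserve them, so $\jets\Gamma_{\hor}(\caL^{\otimes-\oih}\otimes\wedge^{\frac12}T^*_{\caP})$ is simply not defined for this $\caL$ --- horizontality of the transition automorphisms of $\caL$ is one of the two standing hypotheses of \ref{sss:Constructing the twisted bundle HM from the jet bundle}. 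Second, even proceeding formally, your choice hangs the nontrivial automorphy on the deck transformation $x\mapsto x+1$, whereas the gauge transformation $\sigma_\alpha=e^{\pm\oih\xi\fx}$ of \eqref{eq:sigma} only modifies the $\xi\mapsto\xi+1$ factor (by $e^{\mp\oih\fx}$) and cannot move an automorphy factor from one deck transformation to the other. So your construction would produce something like $s(x+1,\xi)=e^{\xih}s(x,\xi)$, $s(x,\xi+1)=s(x,\xi)$ --- precisely the ``$e^{\xih}$ rather than $e^{\xh}$'' outcome you worried about, and not the statement of the lemma.

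The paper makes the transposed choice: $\caL$ with $s(x+1,\xi)=s(x,\xi)$ and $s(x,\xi+1)=e^{2\pi ix}s(x,\xi)$, whose automorphy factor depends on $x$ alone and is therefore horizontal, together with $\nabla_{\caL^{\lambda}}=(\ddx+\lambda\xi)dx+\ddxi\, d\xi$. Taking jets replaces $x$ by $x+\fx$ in the transition factor, so (up to the paper's normalization of $2\pi i$) one gets $s(x,\xi+1,\fx)=e^{\frac{x+\fx}{i\hbar}}s(x,\xi,\fx)$ as in \eqref{eq:formulas for s 2}; the gauge transformation $s\mapsto e^{-\frac{\xi\fx}{i\hbar}}s$ of \eqref{eq:formulas for s 4} then cancels exactly the $\fx$-part of that factor, leaving $e^{\xh}$, while converting $\nabla_{\can}$ of \eqref{eq:connection on H} into the stated $\nabla_{\caH}$. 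The source of your error is conflating two distinct pieces of data: the trivialization $T$ of \eqref{eq:triv T} (the primitive $-x\,d\xi$ on the cover where $x$ is unrolled, which governs the cochain $a_{\alpha\beta}$) and the line bundle $\caL$ on $\T^2$ itself, which must have $\caP$-horizontal transition functions. Once $\caL$ is corrected, the rest of your argument goes through as written.
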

\begin{proof} Choose $\caL$ in such a way that its sections are identified with functions $s(x,\xi)$
$$s(x+1, \xi)=s(x,\xi);\;\;s(x,\xi+1)=e^{2\pi ix}s(x,\xi)$$
Then the formula
\begin{equation}\label{eq:nabla L}
\nabla_{\caL^\lambda}s(x,\xi)=(\ddx+\lambda \xi)dx+\ddxi dx
\end{equation}
defines a connection in $\caL^{\otimes \lambda}$. One has
\begin{equation}\label{eq:formulas for s 1}
\nabla^2_{\caL^\lambda}=-\lambda d\xi dx
\end{equation}
Clearly, the bundle $T_M$ is trivial, and so is $\wedge ^{\frac{1}{2}}T^*_M.$ If one writes formally the transition automorphisms of the bundle $\jets \Gamma_{\hor}(\caL^{-\otimes \oih})$, one identifies its sections with functions $s(x,\xi, \fx)$ subject to
\begin{equation}\label{eq:formulas for s 2}
s(x+1,\xi, \fx)=s(x,\xi, \fx);\;\;s(x,\xi+1, \fx)=e^{\frac{x+\fx}{i\hbar}}s(x,\xi, \fx)
\end{equation}
on which the canonical connection acts via
\begin{equation}\label{eq:formulas for s 3}
\nabla_{\can}=(\ddx-\ddfx)dx+\ddxi d\xi;
\end{equation}
After the gauge transformation
\begin{equation}\label{eq:formulas for s 4}
\sigma: s(x,\xi, \fx)\mapsto \exp(-\frac{\xi\fx}{i\hbar}),
\end{equation}
the equation \eqref{eq:formulas for s 2} becomes
\begin{equation}\label{eq:formulas for s 5}
s(x+1,\xi, \fx)=s(x,\xi, \fx);\;\;s(x,\xi+1, \fx)=e^{\frac{x}{i\hbar}}s(x,\xi, \fx)
\end{equation}
and the formula \eqref{eq:formulas for s 3} for the connection turns into the  statement of Lemma \ref{lemma:H on torus}.
\end{proof}
\subsubsection{The groupoid $G_M$}\label{sss:The groupoid GM on torus} First, let us describe some special elements of $\tG_M$. Let
\begin{equation}\label{eq:element of tG 1}
\exp(\oih((\xi_1-\xi_2+k)\fx-(x_1-x_2+l)\fxi))\in (\tG_M)_{(x_1,\xi_1),(x_2,\xi_2)}
\end{equation}
be the class of $(x_t, g_t)$ where $x_t$ is the straight line path from $(x_1,\xi_1)$ to $(x_2,\xi_2)$, followed by the straight line path from $(x_2,\xi_2)$ to $(x_2+k,\xi_2+l)$, and $g_t={\rm {id}}_{\caH}.$ Sections of $\tG_M$ are of the form
\begin{equation}\label{eq:element of tG 2}
\exp(\oih((\xi_1-\xi_2+k)\fx-(x_1-x_2+l)\fxi))g(x_1, \xi_1, x_2, \xi_2)
\end{equation}
where $g$ is a $\tG$-valued function on $\R^2\times \R^2$ satisfying
\begin{equation}\label{eq:periodicity for tG on torus 1}
\sigma=g(x_1+1, \xi_1, x_2+1, \xi_2)=g(x_1, \xi_1, x_2, \xi_2)
\end{equation}
\begin{equation}\label{eq:periodicity for tG on torus 2}
g(x_1, \xi_1+1, x_2, \xi_2+1)=\exp({\oih(x_1-x_2)})g(x_1, \xi_1, x_2, \xi_2)
\end{equation}
The connection $\nabla_{\tG}$ sends the element $\sigma$ to
\begin{equation}\label{nabla for torus}
\nabla_{\tG}(\sigma)\sigma^{-1}=\oih (\xi_2 dx_2-\xi_1 dx_1)+d_{\DR}(g)g^{-1}+
\end{equation}
$$
\Ad (g^{-1})(\oih(\fxi dx_1-\fx d\xi_1)-\oih(\fxi dx_2-\fx d\xi_2))
$$
viewed as a $\pi^*_2(\tg)$-valued one-form on $\T^2\times \T^2.$ This follows immediately from the general definition of $\nabla_{\tG}$ and from the definition of $\caH_M$ on the torus.
\section{Lagrangian submanifolds and modules}\label{ss:Lagrangian submanifolds and modules}
\subsection{The bundle of modules ${\bV_{L_n}}$}\label{ss:The bundle of modules VLn}
Let $L_n$ be the Lagrangian submanifold $
\{\xi=nx\}$ of $\T^2.$ Sections of the bundle $\bV_{L_n}$ are ${\widehat{\C}}[[\fx]]((\hbar))$-valued functions $s(x, nx+k)$, $x\in \R,\;k\in {\mathbb Z},$ satisfying the conditions as in Lemma \ref{lemma:H on torus}. The gauge transformation $s(x,nx+k)\mapsto \exp(\frac{1}{2i\hbar}nx^2+\oih kx))s(x,nx+k)$ makes this bundle trivial. After identifying $\bV_{L_n}$ with the trivial bundle, we get the following expression for the connection $\nabla_{\bV}$:
\begin{equation}\label{eq:nabla V for torus}
\nabla_{\bV}=(\ddx-\ddfx-\oih n\fx)dx
\end{equation}
The bundle of algebras ${\mathbb A}_M$ acts as follows: $\fxi$ acts by $i\hbar \ddfx+n\fx,$ and $\fx$ by multiplication by $\fx.$
\subsection{The bundle of modules $\V_{L_n}$}\label{sss:The bundle of modules cVLn} Sections of $\V_{L_n}$ are expressions
\begin{equation}\label{eq:section of cVLn}
v=\exp(\oih(\xi - nx-k)\fx)f(x,\xi)
\end{equation}
where $k$ belongs to ${\mathbb Z}$ and $f$ is a two-periodic function with values in $\caH.$ The connection $\nabla_{\V}$ transforms the function $f$ as follows:
\begin{equation}\label{eq:nabla V on the torus 1}
\nabla_{\V}: f\mapsto (-\oih(\xi-nx-k)dx+(\ddx-\ddfx-\oih n\fx)dx + (\ddxi+\oih \fx)d\xi)f,
\end{equation}
or, formally, when applied to the entire expression \eqref{eq:section of cVLn},
\begin{equation}\label{eq:nabla V on the torus 2}
\nabla_{\V}=(\ddx-\ddfx)dx+\ddxi d\xi
\end{equation}
\subsubsection{The action of $\tG_M$ on $\V_{L_n}$}\label{sss:The action of GM on VLN} The groupoid $\tG_M$ acts on $\V_{L_n}$ as follows. If
\begin{equation}\label{eq:element of tG again}
\sigma=\exp(\oih((\xi_1-\xi_2+k)\fx-(x_1-x_2+l)\fxi))g(x_1, \xi_1, x_2, \xi_2)
\end{equation}
as in \eqref{eq:element of tG 2} and
\begin{equation}\label{eq:section of cVLn again}
v=\exp(\oih(\xi_2 - nx_2-k)\fx)f(x_1,x_2, \xi_1, \xi_2)
\end{equation}
is a section of $\pi_2^*\V_{L_n}$ (cf. \eqref{eq:section of cVLn}), then
\begin{equation}\label{eq:section of cVLn again 1}
\sigma v=\exp(\oih(\xi_1 - nx_1-k)\fx)\exp(\frac{n\fx_1 ^2}{2i\hbar}-\frac{n\fx_2 ^2}{2i\hbar}+\frac{kx_1}{i\hbar}-\frac{kx_2}{i\hbar})(gf)(x_1,x_2, \xi_1, \xi_2)
\end{equation}
which is a section of $\pi_1^*\V_{L_n}$.
\subsection{Morphisms $\V_{L_m}\to \V_{L_{m+n}}$}\label{ss:Morphisms VLm to VLmn} Let $n>0$ and $a\in \frac{1}{n}{\mathbb Z}$. Consider the expression
\begin{equation}\label{eq:theta function 1}
\theta _a=\sum_{\nu \equiv a({\rm{mod}}{\mathbb Z})}e^{-\frac{n}{2i \hbar}(x+\fx+\nu)^2}
\end{equation}
which has the following meaning. Transform the above formula to get
\begin{equation}\label{eq:theta function 2}
\theta _a=\sum_{\nu \equiv a({\rm{mod}}{\mathbb Z})}
e^{-\oih n(x+\nu)\fx}e^{{-\frac{n}{2i \hbar}}\fx^2}e^{-\frac{n}{i\hbar}(x+\nu)^2}
\end{equation}
The first factor in the right hand side is viewed as an element of $\tG _{(x,\xi),(x, \xi-n(x+\nu))}$ for any point $(x,\xi);$ the second, and the third, as elements of $\tG$, and therefore of $\tG_{(x,\fxi),(x,\fxi)},$ for any $(x,\xi).$
\begin{thm} \label{thm:theta as hom} The formula $v\mapsto \theta _a v$ defines a morphism $\V_{L_m}\to \V_{L_{n+m}}$ of $(\A_M, \tG_M, \tg_M)$-modules with connections.
\end{thm}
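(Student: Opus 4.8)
\emph{Proof strategy.}
Everything in sight has been written out in coordinates in Sections~\ref{s:The case of the two-torus}--\ref{ss:Lagrangian submanifolds and modules}, so the plan is to check directly, from those formulas, the conditions for a morphism of $(\A_M,\tG_M,\tg_M)$-modules with connections: that $v\mapsto\theta_a v$ is a well-defined map $\V_{L_m}\to\V_{L_{m+n}}$, and that it commutes with the actions of $\A_M$, $\tg_M$, $\tG_M$ and with $\nabla_\V$. The organizing observation is the decomposition \eqref{eq:theta function 2}: each summand of $\theta_a$ is a product of a parallel transport $e^{-\oih n(x+\nu)\fx}$, the metaplectic element $e^{-\frac{n}{2i\hbar}\fx^2}\in\tSp\subset\tG$, and a central scalar from $\exp(\oih\C)$, and the sum over $\nu\equiv a\,(\operatorname{mod}\Z)$ converges in the completion \eqref{eq:completion} underlying $\caH$ (this is the convergence of a theta series, cf.\ \cite{Theta}). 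Thus multiplication by $\theta_a$ is the action of a (countable, after completion) sum of sections of the groupoid $\tG_M$.

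\emph{Well-definedness.}
First I would compute $\theta_a v$ on a section $v=\exp(\oih(\xi-mx-k)\fx)f(x,\xi)$ of $\V_{L_m}$ as in \eqref{eq:section of cVLn}. Expanding $(x+\fx+\nu)^2=(x+\nu)^2+2(x+\nu)\fx+\fx^2$ in each summand and combining the middle, cross term with the prefactor of $v$, the linear factor becomes $\exp(\oih(\xi-(m+n)x-(k+n\nu))\fx)$ --- this is precisely why the target is $\V_{L_{m+n}}$ --- while the Gaussian $e^{-\frac{n}{2i\hbar}\fx^2}$ (which moves the Lagrangian-Grassmannian image of Lemma~\ref{lemma:weil and lagrangian} by $n$) together with the remaining scalar is absorbed into the new $\caH$-valued coefficient. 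I would then check that the full sum over $\nu$ is invariant under $x\mapsto x+1$ --- using that the index set $\{\nu\equiv a\,(\operatorname{mod}\Z)\}$ is stable under $\nu\mapsto\nu+1$ --- and that it transforms under $\xi\mapsto\xi+1$ in the way prescribed for sections of $\V_{L_{m+n}}$, so that $\theta_a v$ is a genuine section. This step --- convergence together with the matching of both quasi-periodicities --- is the one I expect to be the main obstacle.

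\emph{Compatibility with $\A_M$, $\tg_M$, $\tG_M$.}
Since $\theta_a$ is a sum of sections of $\tG_M$, it acts on $\A_M$ by $\Ad$; the parallel-transport and scalar factors act trivially, so the only contribution is $\Ad(e^{-\frac{n}{2i\hbar}\fx^2})$, which fixes $\fx$ and modifies $\fxi$ by a multiple of $\fx$. This modification is exactly the difference between the way $\A_M$ acts on the source, of slope $m$, and on the target, of slope $m+n$ (compare the action of $\fxi$ recorded in Section~\ref{ss:The bundle of modules VLn}), whence $\theta_a(\alpha v)=\alpha(\theta_a v)$ for all $\alpha\in\A_M$. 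As $\tg_M=\oih\A_M$ and $\hbar$ is invertible on $\caH$, the identical computation gives the $\tg_M$-equivariance. For $\tG_M$ I would use the explicit formula \eqref{eq:section of cVLn again 1}: passing from $\V_{L_m}$ to $\V_{L_{m+n}}$ changes it only through the linear prefactor (handled above) and through the Gaussian factor $\exp(\frac{m\fx_1^2}{2i\hbar}-\frac{m\fx_2^2}{2i\hbar})$, and the difference in this Gaussian is reproduced by the quadratic part of $\theta_a$; since that quadratic element commutes past the parallel transports up to the central scalars $\exp(\oih c)$, associativity of the groupoid action yields $\theta_a(\sigma v)=\sigma(\theta_a v)$ for $\sigma\in\tG_M$.

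\emph{Compatibility with the connection.}
This part is easy. In the presentation \eqref{eq:nabla V on the torus 2}, $\nabla_\V$ acts on the whole expression as the first-order operator $(\ddx-\ddfx)dx+\ddxi d\xi$. Each summand of $\theta_a$ is a function of $x+\fx$ alone and is independent of $\xi$, so it is annihilated by this operator, since $(\ddx-\ddfx)(x+\fx+\nu)=0$ and $\ddxi$ kills it. By the Leibniz rule, $\nabla_\V(\theta_a v)=\theta_a\nabla_\V v$. Assembling the four verifications proves the theorem.
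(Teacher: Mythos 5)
First, a point of comparison: the paper never actually proves Theorem \ref{thm:theta as hom}. The proof environment that follows it belongs to Corollary \ref{cor:composition of thetas} and addresses only the composition law (via theta identities and the mirror symmetry of \cite{PZ}); the theorem itself is left as an unproved assertion. So your direct coordinate verification is not an alternative to the paper's argument --- it is the only argument on the table, and its skeleton is right: the cross term in $(x+\fx+\nu)^2$ converts the linear prefactor of slope $m$ into one of slope $m+n$ with the shift $k\mapsto k+n\nu$ (integral precisely because $a\in\frac{1}{n}\Z$ --- worth saying explicitly); $x$-periodicity follows by reindexing $\nu\mapsto\nu+1$; the $\Ad$-action of $e^{-\frac{n}{2i\hbar}\fx^2}$ sends $\fxi\mapsto\fxi+n\fx$, which is exactly the discrepancy between the $\fxi$-actions of slopes $m$ and $m+n$ recorded in \ref{ss:The bundle of modules VLn}; and $\theta_a$, being a function of $x+\fx$ independent of $\xi$, is annihilated by $(\ddx-\ddfx)dx+\ddxi\, d\xi$, giving compatibility with $\nabla_{\V}$ in the formal presentation \eqref{eq:nabla V on the torus 2}.

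Two places want more care. First, convergence: the scalar factors are $e^{c_\nu/(i\hbar)}$ with $c_\nu=-\frac{n}{2}(x+\nu)^2\to-\infty$, whereas \eqref{eq:completion} as literally written requires $\operatorname{Re}(c_k)\to+\infty$; you should not cite \eqref{eq:completion} verbatim but fix the sign convention under which these series lie in the completion (the paper asserts after \eqref{eq:completion} that they do, so this is a convention to be pinned down rather than an obstruction). Second, the $\tG_M$-equivariance paragraph is the thinnest part of your outline. ``Commutes past the parallel transports up to the central scalars'' has to become the concrete statement that for $\sigma$ as in \eqref{eq:element of tG again}, $\Ad(\sigma)$ carries the summand $h_\nu$ of $\theta_a$ based at $(x_2,\xi_2)$ to a summand based at $(x_1,\xi_1)$, up to exactly the central factor $\exp(\frac{n\fx_1^2}{2i\hbar}-\frac{n\fx_2^2}{2i\hbar}+\frac{kx_1}{i\hbar}-\frac{kx_2}{i\hbar})$ built into \eqref{eq:section of cVLn again 1}, with the reindexing of $\nu$ dictated by the periodicities \eqref{eq:periodicity for tG on torus 1}--\eqref{eq:periodicity for tG on torus 2}. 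That is a genuine computation rather than a formality, and it is the one step I would insist on seeing written out before calling the proof complete.
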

\begin{cor}\label{cor:composition of thetas}
The subcategory of the category of oscillatory modules generated by objects $\V_{L_n}$ and morphisms $\theta_a$ is isomorphic to the subcategory of the Fukaya category of $\T^2$ generated by $L_m$ and morphisms $L_m\to L_{m+n}$ for $n>0.$
\end{cor}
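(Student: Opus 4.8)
The plan is to take Theorem~\ref{thm:theta as hom} as given, so that each $\theta_a$ is already a morphism $\V_{L_m}\to\V_{L_{m+n}}$ of $(\A_M,\tG_M,\tg_M)$-modules with connection, and then to deduce the corollary in three steps: (i) compute the composites $\theta_{a'}\circ\theta_a$ explicitly; (ii) recognize the coefficients that appear as the classical structure constants for multiplying theta functions; (iii) quote Polishchuk and Zaslow's computation \cite{PZ} of the relevant part of the Fukaya category of $\T^2$ to see that its composition has the same structure constants. The desired equivalence will then be the functor $\V_{L_m}\mapsto L_m$ (identities to identities) sending each $\theta_a$ to the corresponding basis element of the Floer complex of $L_m$ and $L_{m+n}$.

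For step (i), fix $n,n'>0$, $a\in\frac1n\Z$ and $a'\in\frac1{n'}\Z$. The composite $\theta_{a'}\circ\theta_a\colon\V_{L_m}\to\V_{L_{m+n}}\to\V_{L_{m+n+n'}}$ is multiplication by $\theta_{a'}\theta_a=\sum_{\mu\equiv a',\,\nu\equiv a}e^{-\frac{n'}{2i\hbar}(x+\fx+\mu)^2}e^{-\frac{n}{2i\hbar}(x+\fx+\nu)^2}$. Since each factor is the exponential of a quadratic in the single commuting quantity $x+\fx$, I would complete the square:
\[
e^{-\frac{n'}{2i\hbar}(x+\fx+\mu)^2}e^{-\frac{n}{2i\hbar}(x+\fx+\nu)^2}=e^{-\frac{n+n'}{2i\hbar}(x+\fx+\lambda)^2}\,e^{-\frac{1}{2i\hbar}\frac{nn'}{n+n'}(\mu-\nu)^2},
\]
where $\lambda=\frac{n'\mu+n\nu}{n+n'}$ and, since $n'a'+na\in\Z$, one has $\lambda\in\frac1{n+n'}\Z$. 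The first factor on the right is a summand of the level-$(n+n')$ expression $\theta_{c}$ with $c=\lambda\bmod\Z$, and the second is a scalar in $\exp(\oih\C)$. Summing over $\mu\equiv a'$ and $\nu\equiv a$, and regrouping by the class $c\in\frac1{n+n'}\Z/\Z$, gives $\theta_{a'}\theta_a=\sum_{c}N^{c}_{a',a}\,\theta_c$, where the $N^{c}_{a',a}$ are partial theta series $\sum e^{-\frac{1}{2i\hbar}\frac{nn'}{n+n'}(\mu-\nu)^2}$; these converge in the completion \eqref{eq:completion}, which is precisely what that completion was introduced to accommodate. This identity is the classical addition formula expressing a product of theta functions of levels $n$ and $n'$ through level $n+n'$, with the usual theta constants as coefficients (cf.\ \cite{Theta}), once one passes through the substitution $\tau\mapsto-1/\tau$ and the Poisson summation relating \eqref{eq:theta function 1} to honest level-$n$ theta functions.

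For steps (ii)--(iii), I would invoke \cite{PZ}: the full subcategory of the Fukaya category of $\T^2$ on $\{L_m\}$ has, for $n>0$, $\dim\Hom(L_m,L_{m+n})=n$, with basis the $n$ transverse intersection points of $L_m$ and $L_{m+n}$ (equivalently labelled by $a\in\frac1n\Z/\Z$), and the composition $\Hom(L_m,L_{m+n})\otimes\Hom(L_{m+n},L_{m+n+n'})\to\Hom(L_m,L_{m+n+n'})$ is given by holomorphic-triangle counts weighted by symplectic area; under Polishchuk--Zaslow's mirror identification these weights are exactly the Gaussian factors in the displayed formula, so the composition constants are again the $N^{c}_{a',a}$. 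Granting this, the functor described above is the identity on objects; it is bijective on each morphism space of the generated subcategory, because by step (i) every iterated composite of the $\theta$'s reduces to a linear combination of single $\theta_c$'s, and the $\theta_c$ ($c\in\frac1{n+n'}\Z/\Z$) are readily seen to be linearly independent (their Gaussian summands are supported on the disjoint cosets $c+\Z$ and are themselves independent over the coefficients); and it respects composition by comparing the two computations. Hence it is the asserted isomorphism of subcategories.

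The hard part will be not the formal square-completion of step (i) but its legitimization inside the induced modules $\V_{L_\bullet}$: one must verify, using the $\tG_M$-action formula \eqref{eq:section of cVLn again 1} and the defining equivalence \eqref{eq:VL 2}--\eqref{eq:VL 3}, that $\theta_{a'}\theta_a v$ genuinely reduces to the stated finite $\caH$-valued combination, and that each scalar $N^{c}_{a',a}$ really lies in the completion \eqref{eq:completion} with no divergence. The second delicate point is matching Polishchuk--Zaslow's normalization of the Fukaya composition \emph{on the nose} rather than up to reindexing or an overall scalar; this requires tracking the substitution $\tau\mapsto-1/\tau$ and the Poisson summation of step (i) carefully and fixing the correct identification of $\hbar$ with the mirror complex parameter (and of the two coefficient rings). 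Essential surjectivity onto the generated subcategory, fullness, and compatibility with connections are then immediate --- the last because Theorem~\ref{thm:theta as hom} builds it in.
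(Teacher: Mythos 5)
Your proposal follows essentially the same route as the paper: the paper's proof simply observes that the $\theta_a$ are (after Poisson summation and $\tau\mapsto-1/\tau$) linear combinations of level-$n$ theta functions, so that their products obey the classical addition formula, and then quotes Polishchuk--Zaslow's mirror-symmetry computation of the Fukaya category of $\T^2$. You have merely made explicit the square-completion underlying the theta addition formula and flagged the convergence and normalization issues, which the paper leaves implicit.
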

\begin{proof} Note that $\theta _a$ are, formally, linear combinations of theta functions of level $n$ \cite{Theta} with $\tau=\hbar$ and $z=x+\fx,$ after an application of the Poisson summation formula and the substitution $\tau\mapsto -\frac{1}{\tau}.$ The corollary now follows from the mirror symmetry for $\T^2$, cf. \cite{PZ}.
\end{proof}

\end{document}